\tikzset{every picture/.style={line width=0.75pt}} 
\newenvironment{breakablealgorithm}
  {
   \begin{center}
     \refstepcounter{algorithm}
     \hrule height.8pt depth0pt \kern2pt
     \renewcommand{\caption}[2][\relax]{
       {\raggedright\textbf{\fname@algorithm~\thealgorithm} ##2\par}%
       \ifx\relax##1\relax 
         \addcontentsline{loa}{algorithm}{\protect\numberline{\thealgorithm}##2}%
       \else 
         \addcontentsline{loa}{algorithm}{\protect\numberline{\thealgorithm}##1}%
       \fi
       \kern2pt\hrule\kern2pt
     }
  }{
     \kern2pt\hrule\relax
   \end{center}
  }
\newtheorem{thm}{Theorem}[section]
\newtheorem{lem}[thm]{Lemma}
\newtheorem{cor}[thm]{Corollary}
\newtheorem{prop}[thm]{Proposition}
\newtheorem*{asm*}{Assumption}
\theoremstyle{remark}
\newtheorem*{rem}{Remark}
\newtheorem{remark}{Remark}[section]
\newcommand{\md}{\mathrm{d}}
\newcommand{\me}{\mathrm{e}}
\newcommand{\eqd}{\overset{d}{=}}
\newcommand{\N}{\mathbb{N}}
\newcommand{\E}{\mathbb{E}}
\newcommand{\p}{\mathbb{P}}
\newcommand{\R}{\mathbb{R}}
\newcommand{\Z}{\mathbb{Z}}
\newcommand{\Unif}{\mathrm{U}}
\newcommand{\Exp}{\mathrm{Exp}}
\newcommand{\les}{\leqslant}
\newcommand{\ges}{\geqslant}
\numberwithin{equation}{section}
\numberwithin{figure}{section}
\newcommand{\1}{\mathbbm{1}}
\newcommand{\RP}{\mathbb{R}_+}
\newcommand{\Oh}{\mathcal{O}}
\begin{document}
\title{
Fast exact simulation of the first-passage event of a subordinator
}
\author{
Jorge Ignacio Gonz\'alez C\'azares\footnote{University of Warwick \& The Alan Turing Institute (\texttt{jorge.i.gonzalez-cazares@warwick.ac.uk})}, Feng Lin\footnote{University of Warwick  (\texttt{feng.lin.1@warwick.ac.uk})}, 
Aleksandar Mijatovi\'c\footnote{University of Warwick \& The Alan Turing Institute (\texttt{a.mijatovic@warwick.ac.uk})}}

\date{}
\maketitle
\begin{abstract}
 This paper provides an exact simulation algorithm for the sampling from the joint law of the first-passage time, the undershoot and the overshoot of a subordinator crossing a non-increasing boundary. We prove that the running time of this algorithm has finite moments of all positive orders and give an explicit bound on the expected running time in terms of the L\'evy measure of the subordinator. This bound provides performance guarantees that make our algorithm suitable for Monte Carlo estimation.  We provide a GitHub repository~\cite{repository} with an implementation of the algorithm in Python and Julia and a short \href{https://youtu.be/yQJY-fDBS6Q}{YouTube} presentation~\cite{Presentation_Aleks} describing the results.
\end{abstract}

\begin{keywords}
exact simulation, subordinator, first passage of subordinator, overshoot and undershoot of a subordinator,  expected complexity
\end{keywords}

\textbf{\em AMS Subject Classification 2020:}  60G51, 65C05.

\section{Introduction}\label{section:intro}
Simulation of the first-passage event of a L\'evy process across a barrier is of long standing interest in applied probability. For instance, first-passage events describe the steady-state waiting time and workload in queuing theory~\cite{MR1978607} and 
the ruin event (i.e. the ruin time and penalty function) in risk theory~\cite[Ch.~12]{doi:10.1142/7431}.
First-passage events also arise in financial mathematics in, for example, hedging and pricing barrier-type securities~\cite{MR3015232}. In all these application areas,  Monte Carlo methods for the first-passage event are  widely used~(see e.g.~\cite[Ex.~5.15]{MR2331321}).

Our main motivation for developing a fast exact simulation algorithm for the first-passage event of a subordinator comes from the role the event plays in the probabilistic representation of the solutions to a fractional partial differential equation (FPDE) (see e.g.~\cite{hernandez2017generalised} and the references therein, see also Subsection~\ref{subsec:Example_FPDE} below for a specific example). 
For this Feynman-Kac formula
to be of use in a Monte Carlo approximation of the solution of an FPDE, 
an exact simulation algorithm for the first-passage time of the subordinator is required.
In this context the fractional derivative in the FPDE corresponds to the generator of the subordinator and exact simulation is indeed required because approximate simulation (e.g. via a continuous-time random walk approximation~\cite{MR2791154})
may lead to errors that are hard to control, see the discussion in~\cite[Sec.~3.3.1]{cazares2023fast}.
In~\cite{MR4219829}, exact simulation of the first-passage time of a stable subordinator was used to solve numerically an FPDE with a Caputo fractional derivative~\cite{MR2854867}. Extending~\cite{MR4219829} beyond the stable case to general non-local derivatives requires an exact simulation algorithm for the first-passage event of general subordinators, along with an \textit{a priori} bound on its expected running time. Such an algorithm and explicit bounds on its expected running time are the main contributions of the present paper.

Our main algorithm (Algorithm~\ref{alg:triplet_Z} below) uses in an essential way the exact simulation algorithm for the first-passage event of a tempered stable subordinator recently developed in~\cite{cazares2023fast}, but also requires new ideas, both in simulation as well as in the analysis of the computational complexity of recursive simulation algorithms, such as  Algorithm~\ref{alg:triplet_Z}.  We give an overview of our algorithm and its complexity in Subsections~\ref{subsec:time_complexity} and~\ref{subsec:complexity_intro}, respectively. We  discuss the  exiting literature in the context of Algorithm~\ref{alg:triplet_Z} in Subsection~\ref{subsec:infinite_complexity_Chi} below. The key feature of Algorithm~\ref{alg:triplet_Z}, which makes it suitable for applications in Monte Carlo approximation of the solution of FPDE, is that its random running time has finite moments of all orders  with explicit  control over the expected running time in terms of model parameters (see Corollary~\ref{cor:time_complexity} below). Its application in Monte Carlo approximation  is a topic for future research. In this paper we discuss briefly how to use our algorithm in a numerical example, see Section~\ref{sec:applications} below. An implementation of Algorithm~\ref{alg:triplet_Z} in Python and Julia are available in the GitHub repository~\cite{repository}. A short 
\href{https://youtu.be/yQJY-fDBS6Q}{YouTube} presentation~\cite{Presentation_Aleks}
describs the algorithm, elements of the proof of our main complexity result (Theorem~\ref{thm:main_complexity} below) and numerical examples.


\subsection{Structure of Algorithm~\ref{alg:triplet_Z}}
\label{subsec:time_complexity}
Let $Z$
denote a pure-jump  driftless infinite activity subordinator,
started at $0$, with L\'evy measure $\nu_Z$ (see e.g.~\cite{ken1999levy} for background on L\'evy processes, including subordinators) admitting the representation
\begin{equation}
\label{eq:Levy_measure_description}
\nu_Z
=\nu_{r,q} + \lambda_r,\qquad \text{where}\quad\nu_{r,q}(\md x):=\1_{\{0<x\leq r\}}\vartheta\me^{-qx} x^{-\alpha-1}\md x, \quad x\in(0,\infty),
\end{equation}
for some parameters $\vartheta\in(0,\infty)$, $q\ges0$, $\alpha\in(0,1)$, $r\in(0,\infty]$ and a L\'evy measure $\lambda_r$ on $(0,\infty)$ with finite total mass $\Lambda_r:=\lambda_r(0,\infty)\in [0,\infty)$. 
The representation in~\eqref{eq:Levy_measure_description} 
requires only that the intensity of the small jumps of $Z$ are proportional to those of a tempered stable subordinator.\footnote{See definition of a tempered stable subordinator via its characteristic exponent in~\eqref{eq:def_tempered_stable} below.} Such a process $Z$ indeed belongs to 
a large class of subordinators:  if a L\'evy measure
$\nu_Z(\md x)=\nu_Z(x) \md x$ has 
a density $\nu_Z(x)$  near~$0$, satisfying
$\vartheta\coloneqq\lim_{x\downarrow0}\nu_Z(x)x^{\alpha+1}\in(0,\infty)$ and $\limsup_{x\downarrow0}|\nu_Z(x)x^{\alpha+1}-\vartheta|/x<\infty$, then~\eqref{eq:Levy_measure_description}
holds, see Lemma~\ref{lem:decompose} below.

In this paper we develop and analyse an exact simulation algorithm (see Algorithm~\ref{alg:triplet_Z} and its flowchart in Figure~\ref{fig:flowchart} below) for the random element 
\begin{equation}
    \label{eq:first_passage_event_vector}
\chi_c^Z:=\big(\tau_c^Z,
    Z_{\tau_c^Z-},
    Z_{\tau_c^Z}\big),
\qquad
\text{where}\quad 
\tau_c^Z\coloneqq\inf\{t>0\,:\,Z_t>c(t)\}
\end{equation}
and $c:[0,\infty)\to[0,\infty)$ is 
a non-increasing absolutely continuous function 
 satisfying $c_0:=c(0)\in(0,\infty)$.
The vector $\chi_c^Z$ represents the first-passage event of $Z$ over the boundary function $c$.
Since $\vartheta, r>0$, $c_0>0$ and the process $Z$ has a.s. increasing paths, we have $0<\tau_c^Z<\infty$. The left limits $Z_{t-}:=\lim_{s\uparrow t}Z(s)$ exist at all times $t>0$ a.s., with $Z_{\tau_c^Z-}$ denoting the level just before the crossing the boundary $c$.
Note that the pure-jump assumption does not restrict the scope of  Algorithm~\ref{alg:triplet_Z}: for a subordinator with drift $\mu>0$, we can apply 
Algorithm~\ref{alg:triplet_Z} to the driftless subordinator $\tilde Z_t:=Z_t-\mu t$ and the decreasing  boundary function $\tilde c(t):=c(t)-\mu t$, noting that $\chi_c^Z=\chi_{\tilde c}^{\tilde Z}+(0,\mu\tau_{\tilde c}^{\tilde Z},\mu\tau_{\tilde c}^{\tilde Z})$.

\begin{figure}
\centering
\begin{tikzpicture}[x=0.75pt,y=0.75pt,yscale=-1,xscale=1, scale=0.8, every node/.style={scale=0.8}]

\draw   (29,27) .. controls (29,15.4) and (105.11,6) .. (199,6) .. controls (292.89,6) and (369,15.4) .. (369,27) .. controls (369,38.6) and (292.89,48) .. (199,48) .. controls (105.11,48) and (29,38.6) .. (29,27) -- cycle ;
\draw   (82,70) -- (316,70) -- (316,110) -- (82,110) -- cycle ;
\draw   (43,140) -- (406,140) -- (406,180) -- (43,180) -- cycle ;
\draw   (197,213.5) -- (251,240) -- (197,266.5) -- (143,240) -- cycle ;
\draw   (361,223) -- (643,223) -- (643,263) -- (361,263) -- cycle ;
\draw   (362,302) -- (656,302) -- (656,342) -- (362,342) -- cycle ;
\draw   (59,295.5) -- (350,295.5) -- (350,348) -- (59,348) -- cycle ;
\draw   (91,381) -- (319,381) -- (319,421) -- (91,421) -- cycle ;
\draw   (198,453) -- (281,488) -- (198,523) -- (115,488) -- cycle ;
\draw   (494,452.5) -- (577,487.5) -- (494,522.5) -- (411,487.5) -- cycle ;
\draw    (199,48) -- (199,69.5) ;
\draw [shift={(199,71.5)}, rotate = 270] [color={rgb, 255:red, 0; green, 0; blue, 0 }  ][line width=0.75]    (10.93,-3.29) .. controls (6.95,-1.4) and (3.31,-0.3) .. (0,0) .. controls (3.31,0.3) and (6.95,1.4) .. (10.93,3.29)   ;
\draw    (200,110) -- (200,134.5) ;
\draw [shift={(200,136.5)}, rotate = 270] [color={rgb, 255:red, 0; green, 0; blue, 0 }  ][line width=0.75]    (10.93,-3.29) .. controls (6.95,-1.4) and (3.31,-0.3) .. (0,0) .. controls (3.31,0.3) and (6.95,1.4) .. (10.93,3.29)   ;
\draw    (198,181) -- (197.06,211.5) ;
\draw [shift={(197,213.5)}, rotate = 271.76] [color={rgb, 255:red, 0; green, 0; blue, 0 }  ][line width=0.75]    (10.93,-3.29) .. controls (6.95,-1.4) and (3.31,-0.3) .. (0,0) .. controls (3.31,0.3) and (6.95,1.4) .. (10.93,3.29)   ;
\draw    (197,266.5) -- (197,291) ;
\draw [shift={(197,293)}, rotate = 270] [color={rgb, 255:red, 0; green, 0; blue, 0 }  ][line width=0.75]    (10.93,-3.29) .. controls (6.95,-1.4) and (3.31,-0.3) .. (0,0) .. controls (3.31,0.3) and (6.95,1.4) .. (10.93,3.29)   ;
\draw    (197,348.5) -- (197.94,380.5) ;
\draw [shift={(198,382.5)}, rotate = 268.32] [color={rgb, 255:red, 0; green, 0; blue, 0 }  ][line width=0.75]    (10.93,-3.29) .. controls (6.95,-1.4) and (3.31,-0.3) .. (0,0) .. controls (3.31,0.3) and (6.95,1.4) .. (10.93,3.29)   ;
\draw    (251,240) -- (358,239.51) ;
\draw [shift={(360,239.5)}, rotate = 179.74] [color={rgb, 255:red, 0; green, 0; blue, 0 }  ][line width=0.75]    (10.93,-3.29) .. controls (6.95,-1.4) and (3.31,-0.3) .. (0,0) .. controls (3.31,0.3) and (6.95,1.4) .. (10.93,3.29)   ;
\draw    (495.5,263.25) -- (495.65,290.42) -- (495.53,299.5) ;
\draw [shift={(495.5,301.5)}, rotate = 270.78] [color={rgb, 255:red, 0; green, 0; blue, 0 }  ][line width=0.75]    (10.93,-3.29) .. controls (6.95,-1.4) and (3.31,-0.3) .. (0,0) .. controls (3.31,0.3) and (6.95,1.4) .. (10.93,3.29)   ;
\draw    (198,421.5) -- (198,451) ;
\draw [shift={(198,453)}, rotate = 270] [color={rgb, 255:red, 0; green, 0; blue, 0 }  ][line width=0.75]    (10.93,-3.29) .. controls (6.95,-1.4) and (3.31,-0.3) .. (0,0) .. controls (3.31,0.3) and (6.95,1.4) .. (10.93,3.29)   ;
\draw    (494.5,418.75) -- (494.03,450.5) ;
\draw [shift={(494,452.5)}, rotate = 270.85] [color={rgb, 255:red, 0; green, 0; blue, 0 }  ][line width=0.75]    (10.93,-3.29) .. controls (6.95,-1.4) and (3.31,-0.3) .. (0,0) .. controls (3.31,0.3) and (6.95,1.4) .. (10.93,3.29)   ;
\draw   (383,378) -- (611,378) -- (611,418) -- (383,418) -- cycle ;
\draw    (494.5,342.25) -- (494.97,376.5) ;
\draw [shift={(495,378.5)}, rotate = 269.21] [color={rgb, 255:red, 0; green, 0; blue, 0 }  ][line width=0.75]    (10.93,-3.29) .. controls (6.95,-1.4) and (3.31,-0.3) .. (0,0) .. controls (3.31,0.3) and (6.95,1.4) .. (10.93,3.29)   ;
\draw   (259,628) .. controls (259,616.95) and (297.73,608) .. (345.5,608) .. controls (393.27,608) and (432,616.95) .. (432,628) .. controls (432,639.05) and (393.27,648) .. (345.5,648) .. controls (297.73,648) and (259,639.05) .. (259,628) -- cycle ;
\draw    (345.5,579.25) -- (345.5,606) ;
\draw [shift={(345.5,608)}, rotate = 270] [color={rgb, 255:red, 0; green, 0; blue, 0 }  ][line width=0.75]    (10.93,-3.29) .. controls (6.95,-1.4) and (3.31,-0.3) .. (0,0) .. controls (3.31,0.3) and (6.95,1.4) .. (10.93,3.29)   ;
\draw    (198,523) -- (198,577.5) -- (374.5,579.25) ;
\draw    (374.5,579.25) -- (495.5,577.75) -- (494,522.5) ;
\draw    (577,487.5) -- (674,488.5) -- (670,91.5) -- (319,87.52) ;
\draw [shift={(317,87.5)}, rotate = 0.65] [color={rgb, 255:red, 0; green, 0; blue, 0 }  ][line width=0.75]    (10.93,-3.29) .. controls (6.95,-1.4) and (3.31,-0.3) .. (0,0) .. controls (3.31,0.3) and (6.95,1.4) .. (10.93,3.29)   ;
\draw    (115,488) -- (21,489.5) -- (19,162.5) -- (41,161.58) ;
\draw [shift={(43,161.5)}, rotate = 177.61] [color={rgb, 255:red, 0; green, 0; blue, 0 }  ][line width=0.75]    (10.93,-3.29) .. controls (6.95,-1.4) and (3.31,-0.3) .. (0,0) .. controls (3.31,0.3) and (6.95,1.4) .. (10.93,3.29)   ;

\draw (45,16) node [anchor=north west][inner sep=0.75pt]   [align=left] {$\displaystyle ( T,U,V)\leftarrow ( 0,0,0) ,\ b( t)\leftarrow \min\{r\rho ,c( t)\}$};
\draw (88,82) node [anchor=north west][inner sep=0.75pt]   [align=left] {Sample $\displaystyle ( D,J) \sim \text{Exp}( \Lambda_r ) \times \lambda_r /\Lambda_r $};
\draw (46,144) node [anchor=north west][inner sep=0.75pt]   [align=left] {Sample $\displaystyle ( T',U',V') \sim \mathcal{L}\left( \tau _{b}^{Y} ,Y_{\tau _{b}^{Y} -} ,Y_{\tau _{b}^{Y}}\right)$};
\draw (166,229) node [anchor=north west][inner sep=0.75pt]   [align=left] {$\displaystyle T'< D?$};
\draw (368,233) node [anchor=north west][inner sep=0.75pt]   [align=left] {Sample $\displaystyle W\sim \mathcal{L}( Y_D |\{Y_D < b( D)\})$};
\draw (365,312) node [anchor=north west][inner sep=0.75pt]   [align=left] {$\displaystyle ( T,U,V)\leftarrow ( T+D,V+W,V+W+J)$};
\draw (76,305) node [anchor=north west][inner sep=0.75pt]   [align=left] {$\displaystyle  \begin{array}{{>{\displaystyle}l}}
( T,U,V)\leftarrow ( T+T',V+U',V+V')\\
D\leftarrow D-T'
\end{array}$};
\draw (100,392) node [anchor=north west][inner sep=0.75pt]   [align=left] {$\displaystyle b( t)\leftarrow \min\{r\rho ,c( t+T) -V\}$};
\draw (159,477) node [anchor=north west][inner sep=0.75pt]   [align=left] {$\displaystyle b( 0) \les 0?$};
\draw (469,477) node [anchor=north west][inner sep=0.75pt]   [align=left] {$\displaystyle b( 0) \les 0?$};
\draw (398,389) node [anchor=north west][inner sep=0.75pt]   [align=left] {$\displaystyle b( t)\leftarrow \min\{r\rho ,c( t+T) -V\}$};
\draw (288,618) node [anchor=north west][inner sep=0.75pt]   [align=left] {Output $\displaystyle ( T,U,V)$};
\draw (205,269) node [anchor=north west][inner sep=0.75pt]   [align=left] {Yes};
\draw (211,540) node [anchor=north west][inner sep=0.75pt]   [align=left] {Yes};
\draw (506,544) node [anchor=north west][inner sep=0.75pt]   [align=left] {Yes};
\draw (640,460) node [anchor=north west][inner sep=0.75pt]   [align=left] {No};
\draw (71,466) node [anchor=north west][inner sep=0.75pt]   [align=left] {No};
\draw (300,224) node [anchor=north west][inner sep=0.75pt]   [align=left] {No};
\end{tikzpicture}
\caption{Flowchart of Algorithm~\ref{alg:triplet_Z} for the simulation of the random element $\big(\tau_c^Z,
    Z_{\tau_c^Z-},
    Z_{\tau_c^Z}\big)$, where the subordinator $Z$ is defined in~\eqref{eq:Levy_measure_description}. The pair $(D,J)$ is the first jump time and size of the compound Poisson process defined via the L\'evy measure $\lambda_r$ in~\eqref{eq:Levy_measure_description},  process $Y$ is the driftless truncated tempered stable subordinator with L\'evy measure $\nu_{r,q}$ given in~\eqref{eq:Levy_measure_description}, introduced at the beginning of Section~\ref{section:result}. There are two key steps in Algorithm~\ref{alg:triplet_Z}: (i) the simulation of the triplet $\left( \tau _{b}^{Y} ,Y_{\tau _{b}^{Y} -} ,Y_{\tau _{b}^{Y}}\right)$ (see line~\ref{step:fpe_truncated_tempered_stable} of Algorithm~\ref{alg:triplet_Z}), based on~\cite[Alg.~1]{cazares2023fast} for the tempered stable first-passage event, and (ii) the simulation from the conditional law $\mathcal{L}( Y_D |\{Y_D < b( D)\})$ via Algorithm~\ref{alg:small_tempered_stable} below, which is based on Devroye's  algorithm~\cite{devroye2012note} for log-concave densities. Algorithm~\ref{alg:small_tempered_stable} is critical for the bound on the expected running time in Corollary~\ref{cor:time_complexity} below, because $b(D)$ might be tiny with significant probability, making a naive simulation of $Y_D$, until  $Y_D<b(D)$, possibly have infinite expected running time. The parameter $\rho\in(0,1)$ can be optimised in terms of $\alpha$. Its optimal value lies in the interval $[1/\me,1/2]$ and we typically set it to $1/2$.}
    \label{fig:flowchart}
\end{figure}
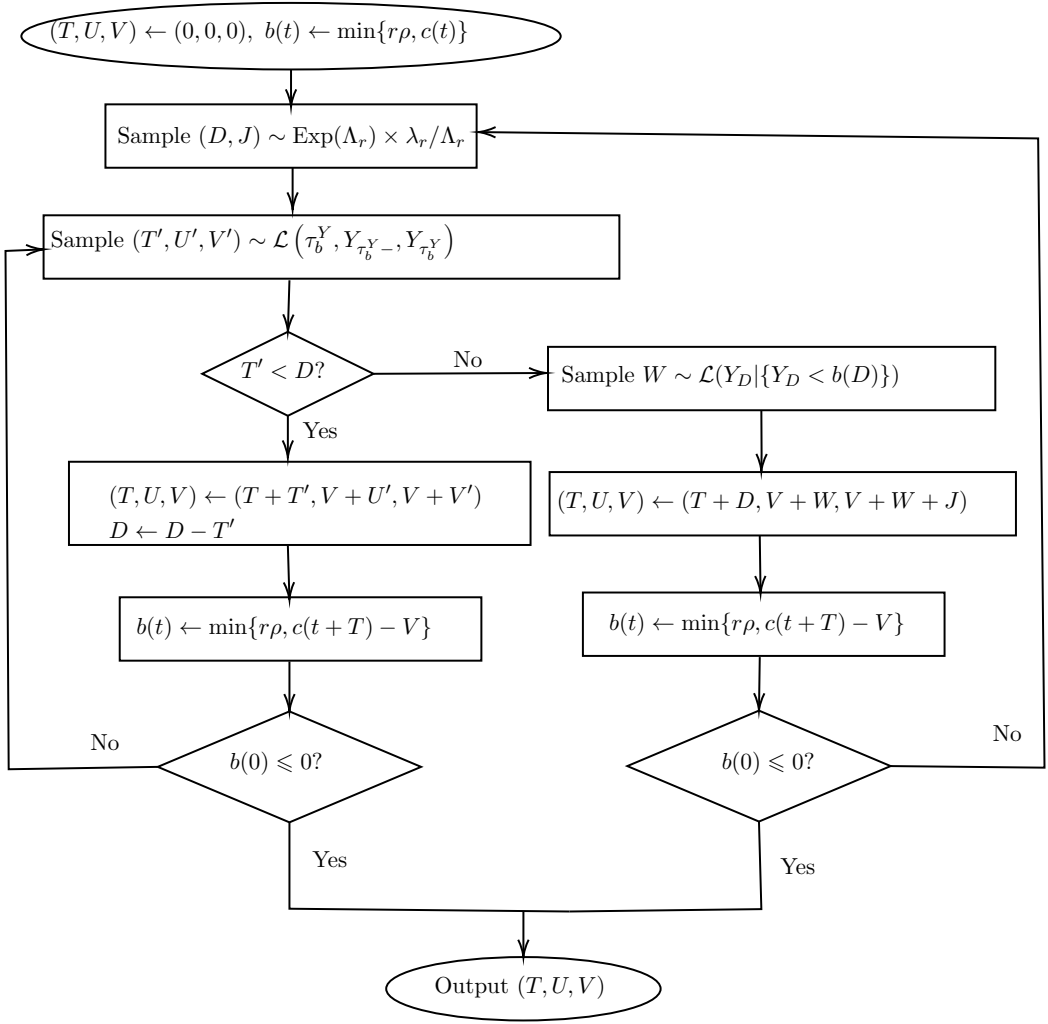



According to the flowchart in  Figure~\ref{fig:flowchart}, Algorithm~\ref{alg:triplet_Z} samples the triplet $ \chi_c^Z$ 
as follows: first it defines the barrier function $b(t)$, which is bounded above by the truncation level $r$ in~\eqref{eq:Levy_measure_description} (i.e.  $\rho\in(0,1)$) and the original function $c(t)$.
Then, Algorithm~\ref{alg:triplet_Z} samples 
the first jump time $D$, together with the corresponding  jump $J$, of the compound Poisson process with L\'evy measure $\lambda_r$ in~\eqref{eq:Levy_measure_description}. If this jump 
occurs after (resp. before) the truncated tempered stable process $Y$ (with L\'evy measure $\nu_{r,q}$ in~\eqref{eq:Levy_measure_description}) has crossed the barrier $b(t)$, 
the process $Z$ has crossed the barrier $b(t)$ (resp.
we sample $Y$, conditional on being below the barrier).
In both cases,
the barrier and the triplet are updated and the algorithm checks whether the original barrier $c(t)$ has been crossed. If not (resp. yes), the procedure is repeated on the updated barrier and starting point (resp. the updated triplet is returned).

The key steps in Algorithm~\ref{alg:triplet_Z} are as follows: (I) the simulation of the first-passage event of a truncated tempered stable subordinator, (II) the simulation of the increment of a truncated tempered stable subordinator, conditioned to be less than a given level. For (I),~\cite[Alg.~1]{cazares2023fast} provides an algorithm to sample the first-passage event of a tempered stable subordinator with running time that has an exponential moment. \cite[Alg.~1]{cazares2023fast} is applicable to the \textit{truncated} tempered stable process $Y$ 
because the target boundary satisfies $b(t)<r$, 
implying that only the jump at the crossing time can be greater than $r$, which Algorithm~\ref{alg:triplet_Y} corrects via thinning. 
 For (II), we apply Algorithm~\ref{alg:small_tempered_stable} below, based on the extremely efficient 
  Devroye's  algorithm~\cite{devroye2012note} for log-concave densities. As in~(I), Algorithm~\ref{alg:small_tempered_stable} for tempered stable processes is applicable to the \textit{truncated} tempered stable process $Y$, since 
   $b(D)<r$, where $r$ is the truncation level. 

\subsection{Complexity of Algorithm~\ref{alg:triplet_Z}}
\label{subsec:complexity_intro}

The expected computational complexity (or running time) of Algorithm~\ref{alg:triplet_Z} is given in Theorem~\ref{thm:main_complexity}
as a function of the parameters in~\eqref{eq:Levy_measure_description}
and the complexity of a simulation algorithm for the triplet in~\eqref{eq:first_passage_event_vector} when $Z$ is a tempered stable subordinator with L\'evy measure in~\eqref{eq:def_tempered_stable} below (note that $Z$ given in~\eqref{eq:Levy_measure_description} is tempered stable if and only if 
$r=\infty$ and $\Lambda_r=0$). 
The bound in Theorem~\ref{thm:main_complexity} suggests natural choices for the free parameters $\rho=1/2$ and $r=\max\{2\alpha/q,r_0\}$, see discussion in Subsection~\ref{subsec:Implementation} below. 

In Corollary~\ref{cor:time_complexity}, we provide an explicit upper bound on the expected running time of Algorithm~\ref{alg:triplet_Z} when the crucial tempered stable algorithm  for the simulation of the first-passage triplet is the one in our
recent paper~\cite{cazares2023fast}. 
 In the present paper we show that the random running time of Algorithm~\ref{alg:triplet_Z}  possesses moments of all orders and give an upper bound, which is (up to the cost of elementary operations such as evaluation of sums, products, as well as the elementary functions $\sin$, $\cos$, $\exp$ and the gamma function $\Gamma$) explicit in the model parameters, see Corollary~\ref{cor:time_complexity} below. In particular, this bound indicates the rate of the deterioration of the behaviour of the expected running time $\mathcal{T}(\alpha,\vartheta,q,c_0)$ of Algorithm~\ref{alg:triplet_Z} when model parameters in~\eqref{eq:Levy_measure_description} take extreme values\footnote{Recall that, by definition, we have $f(x)=\Oh(g(x))$ as $x\to x_*\in[0,\infty]$ if $\limsup_{x\to x_*}f(x)/g(x)<\infty$.}:
\begin{equation}
\label{eq:asymptotic-complexity}
\mathcal{T}(\alpha,\vartheta,q,c_0)=
\begin{cases}
    \Oh(|\log\alpha|\alpha^{-3}), 
        & \text{as }\alpha\to0,\\
    \Oh((1-\alpha)^{-3}),
        & \text{as }\alpha\to 1,\\
    \Oh(q(1+\1_{\{\Lambda_{r_0}=0\}}q^\alpha)),
        & \text{as }q\to\infty,
\end{cases}\quad 
\mathcal{T}(\alpha,\vartheta,q,c_0)=
\begin{cases}
    \Oh(c_0),
        & \text{as }c_0\to\infty,\\
    \Oh(1),
    &\text{as }\vartheta\to0,\\
    \Oh(1),
    &\text{as }\vartheta\to\infty.
\end{cases}
\end{equation}

In~\eqref{eq:asymptotic-complexity}, $r_0$ denotes 
the largest $r\in(0,\infty]$ for which the representation in~\eqref{eq:Levy_measure_description} holds (see paragraph of Eq.~\eqref{eq:decompostion_of_lambda} below for definition)
and $\Lambda_{r_0}=0$ corresponds to $Z$ being a 
truncated tempered stable subordinator if $r_0<\infty$ and tempered stable otherwise. By Corollary~\ref{cor:time_complexity} below, in the regimes $\alpha\to1$ or $c_0\to\infty$, Algorithm~\ref{alg:triplet_Z} for a general subordinator $Z$ given by~\eqref{eq:Levy_measure_description}
exhibits the same deterioration in performance as~\cite[Alg.~2]{cazares2023fast} for the tempered stable subordinator, see~\cite[Cor.~2.2]{cazares2023fast}. In the regime $\alpha\to1$, this is expected for the following reasons: (I) the acceptance probability in line~\ref{step:accept_jump<r} of Algorithm~\ref{alg:triplet_Y} (called in line~\ref{step:fpe_truncated_tempered_stable} of Algorithm~\ref{alg:triplet_Z}) converges to $1/2$, where the main work in Algorithm~\ref{alg:triplet_Y} is done by~\cite[Alg.~1]{cazares2023fast} for tempered stable subordinators,
and (II) the number of jumps of the compound Poisson process with jump measure $\lambda_r$ during the time interval $[0,\tau_c^Z]$ decreases since the infinite activity part of $Z$ increases, making $\tau_c^Z$ smaller. In contrast, as $\alpha\to0$ or $q\to\infty$, the performance of Algorithm~\ref{alg:triplet_Z} generally deteriorates more than~\cite[Alg.~2]{cazares2023fast} (recall from~\cite[Cor.~2.2]{cazares2023fast} that in the tempered stable case the bound is $\Oh(q)$ as $q\to\infty$ and $\Oh(|\log\alpha|\alpha^{-2})$ as $\alpha\to0$)\footnote{Note  that a difference in behaviour between the regimes $\alpha\to1$ and $\alpha\to0$ 
is analogous to the behaviour in~\cite{MR4122822}:
the expected running time of~\cite[Alg.~4.1]{MR4122822}, which samples $(\tau_c^S,S_{\tau_c^S})$ for $c\equiv r$ and a truncated tempered stable subordinator $S$, is bounded as $\alpha\to 1$ and is proportional to $1/\alpha$ as $\alpha\to 0$. These bounds are not explicitly stated in~\cite{MR4122822}, but can be easily obtained from the analysis in~\cite{MR4122822}.}. This occurs since, as $\alpha\to0$, the acceptance probability in line~\ref{step:accept_jump<r} of Algorithm~\ref{alg:triplet_Y} is asymptotically equivalent to $1-2^{-\alpha}$, and hence $\alpha\log2$, as $\alpha\to0$. Moreover, if $\Lambda_{r_0}=0$, then as $q\to\infty$ the expected number of jumps of the compound Poisson component with L\'evy measure $\lambda_r$ until time $\tau_c^Z$ grows as $q^{1+\alpha}$. Based on the numerical study  in~\cite[Subsec.~3.1]{cazares2023fast}, we expect that in the regime $q\to\infty$ the deterioration is close to the upper bound, while in the case $\alpha\to0$ and $\alpha\to1$,
the actual expected running time of Algorithm~\ref{alg:triplet_Z} grows more slowly than the bound in Corollary~\ref{cor:time_complexity} below suggests.

In Figure~\ref{fig:implematation_running_time_alpha} we present the observed expected complexity of the algorithm for the suggested value $r=\min\{r_0,2\alpha/q\}$ and for the range $\alpha\in[.05,.95]$. The  complexity as $\alpha\to 0$ exhibits explosion  as predicted by Theorem~\ref{thm:main_complexity}. However, as $\alpha\to 1$, the exhibited computational complexity appears to be much better behaved than predicted. Indeed, the observed complexity appears not to deteriorate as $\alpha\to 1$.

\begin{figure}
\centering
\includegraphics[width=.75\textwidth]{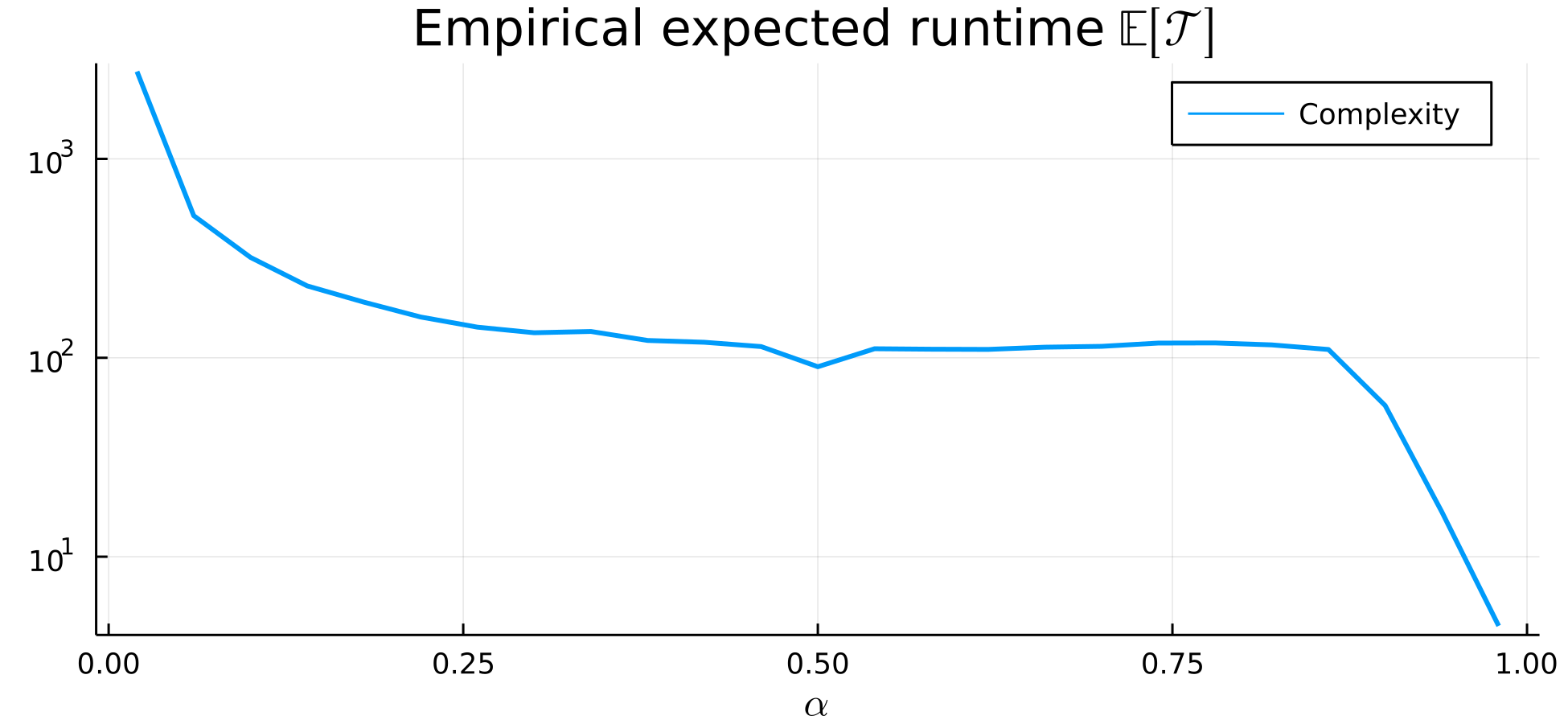}        
\caption{The graph plots the complexity of the implementation in~\cite{repository}  of Algorithm~\ref{alg:triplet_Z} with parameters $\vartheta=2$, $r_0=\infty$, $c(t)\equiv 5$, $\lambda_{r_0}(\md x)=\me^{-x}\md x$, $q=10$ and the truncation $r=\min\{r_0,2\alpha/q\}=2\alpha/q$ for the range $\alpha\in[.05,.95]$. The average running time $\E[\mathcal T]$ is measured in seconds taken for every $10^4$ samples.}
\label{fig:implematation_running_time_alpha}
\end{figure}

\subsection{Comparison with the literature}
\label{subsec:infinite_complexity_Chi}

As explained in the Subsection~\ref{subsec:time_complexity} above,~
\cite[Alg.~1]{cazares2023fast} plays a key role in the main algorithm 
of the present paper. 
However, the computational feasibility of Algorithm~\ref{alg:triplet_Z} rests on two additional ingredients:  efficient simulation of a tempered stable variable, conditioned on being small and sequential analysis of the complexity of an iterative algorithm, where the iterations are not independent of each other. The later is reflected in the proof of our main results, Theorem~\ref{thm:main_complexity} below, through the elementary but crucial Lemma~\ref{lem:comp_cost}. Neither of these ingredients feature in~\cite{cazares2023fast}.

To the best of our knowledge, the main algorithm in~\cite{chi2016exact} is the only other exact simulation procedure
applicable to the class of subordinators in~\eqref{eq:Levy_measure_description}, considered in this paper, and 
general absolutely continuous non-increasing boundary functions.
The main algorithm in~\cite{chi2016exact} 
and Algorithm~\ref{alg:triplet_Z}
use related ingredients, but their structure is different. 
As discussed in~\cite[Sec.~2.4]{cazares2023fast}, the main simulation algorithm in~\cite{chi2016exact} typically has  
an infinite expected running time (already in the case when $Z$ is a stable subordinator), making it unsuitable for Monte Carlo estimation.

In order to understand the relationship of Algorithm~\ref{alg:triplet_Z} and that in~\cite{chi2016exact}, we first recall the main idea and the key simulation steps in~\cite{chi2016exact}. In order to sample the first-passage triplet for $Z$,~\cite{chi2016exact} simulates the triplet for the truncated (at $r$) tempered stable process $Y$. The main idea in~\cite{chi2016exact} is to embed  $Y$ into the stable subordinator $\zeta$ as follows: express
$\zeta= S+P$, where $S$ is a tempered stable subordinator and  $P$ is an independent driftless compound Poisson process with L\'evy measure 
$\vartheta(1-\me^{-qx})x^{-\alpha-1}\md x$.
Note that, for any $q>0$,
the L\'evy measure of $P$ has finite mass and that its sum with the L\'evy measure $\nu_q$ of $S$
in~\eqref{eq:def_tempered_stable} yields a stable L\'evy measure $\vartheta x^{-\alpha-1}\md x$.
In order to sample
samples $Y_{\tau_{\bar b}^Y-}$ at the crossing time $\tau_{\bar b}^Y$
over the boundary function
boundary $\bar b(t):=r\wedge c(t)$,~\cite{chi2016exact} proceeds in two steps: first, it samples the undershoot of the stable process $s:=\zeta_{\tau_{\bar b}^\zeta-}$ and, second, the summands in
$\zeta_{\tau_{\bar b}^\zeta-}=S_{\tau_{\bar b}^\zeta-}+P_{\tau_{\bar b}^\zeta-}=s$, conditional on the value of the sum.
As shown in~\cite[Sec.~2.4]{cazares2023fast},
the simulation of the stable undershoot $\zeta_{\tau_{\bar b}^\zeta-}$ in
line~3 of~\cite[Sec.~7]{chi2016exact}
has infinite expected running time. 

It is remarkable that line~5 of~\cite[Sec.~7]{chi2016exact}  simulates the bridge $S_{\tau_{\bar b}^\zeta-}+P_{\tau_{\bar b}^\zeta-}=s$, given 
the lack of the analytical tractability of the densities of the summands (it is in fact not even clear which of the two summands  gives rise to the crossing of the stable subordinator $\zeta=S+P$). To put the achievement of~\cite{chi2016exact} in perspective, note that the main underlying technical problem solved in~\cite{MR3500619} is that of a simulation of a bridge, where the two summands are independent stable random variables.
Given the complexity of the task undertaken in~\cite[Sec.~7]{chi2016exact}, it is perhaps  not surprising that it will typically have  infinite expected running time, see Appendix~\ref{app:Chi} below for a proof of this assertion. 

The main conclusion of the above discussion is that one cannot simply apply the fast exact simulation algorithm   for a tempered stable subordinator in~\cite{cazares2023fast}, together with the main algorithm in~\cite{chi2016exact}, to obtain an efficient exact algorithm for the first-passage event in~\eqref{eq:first_passage_event_vector}
for a general subordinator $Z$.
We conclude this section by noting that~\cite[Alg.~4.1]{MR4122822} cannot be applied directly to the first-passage simulation problem of the truncated tempered stable subordinator considered in this paper for the following reasons:~\cite{MR4122822} does not sample the undershoot  and considers constant barrier functions only, as the main aim of~\cite{MR4122822} is to sample a marginal distribution of a truncated tempered stable process.

\subsection{Organization of this paper}
Our main algorithm and the results on the expected computational complexity are provided in Section~\ref{section:result}. Section~\ref{sec:alg_small_tempered_stable} describes the algorithm to simulate a tempered stable random variable conditioned to be small.  In Section~\ref{sec:applications} we 
discuss how to choose the truncation level in practice and 
present a
numerical applications of Algorithm~\ref{alg:triplet_Z} in Monte Carlo estimation of the solutions of FPDEs.  The proofs of the validity of Algorithm~\ref{alg:triplet_Z} and the analysis of its computational complexity are given in Section~\ref{sec:proof}. Appendices~\ref{sec:class_of_subordinators} and~\ref{sec:hitting_times}
provide some known results on the behaviour of the L\'evy density close to zero and exponential moments of crossing times, respectively. Appendix~\ref{app:Chi} analyses the expected running time of the main algorithm in~\cite{chi2016exact}.
 
\section{Sampling of the first passage event of a subordinator}\label{section:result}

Let the L\'evy measures 
$\lambda_r$
and $\nu_{r,q}$
be as in~\eqref{eq:Levy_measure_description} with parameters
$\alpha\in(0,1)$, $\vartheta,r\in(0,\infty)$, $q\in[0,\infty)$. 
Recall that $\Lambda_r=\lambda_r(0,\infty)<\infty$ and denote by 
$Q$ the compound Poisson process with L\'evy measure $\lambda_r$. Let $S$ be a driftless tempered stable subordinator, started at $0$, with Laplace transform and L\'evy measure 
\begin{equation}
\label{eq:def_tempered_stable}
\E[\me^{-uS_1}]
=\exp\big((q^\alpha-(u+q)^\alpha)\theta\big),
    \enskip u\ges0
\quad \&\quad \nu_q(\md x)
=(\alpha/\Gamma(1-\alpha))\theta\me^{-qx}x^{-\alpha-1}\md x,
    \enskip x>0,
\end{equation}
respectively. Here we set $\theta:=\vartheta \Gamma(1-\alpha)/\alpha\in(0,\infty)$, where $\Gamma$ is the Gamma function, so that the Laplace transform of $S_1$ has the form in~\eqref{eq:def_tempered_stable}. Let $Y$ be the subordinator equal to $S$ with all the jumps of size greater than $r$ removed. Then $Y$ is a driftless subordinator with L\'evy measure $\nu_{r,q}$, independent of $Q$, and the subordinator $Y+Q$ has the same law as $Z$. 

\begin{asm*}[FPTS-Alg]
\label{alg:FPTS}
Given a non-increasing absolutely continuous function $f:[0,\infty)\to[0,\infty)$ with $f(0)>0$. Assume that the running time of the first-passage tempered-stable algorithm (\nameref{alg:FPTS}) 
for generating a sample from the law of 
$\big(\tau_f^S,
    S_{\tau_f^S-},
    S_{\tau_f^S}\big)$ 
has a finite exponential moment that, for fixed $(\alpha,\vartheta,q)$ and $\mathcal{K}\in(0,\infty)$, is uniformly bounded for $f(0)\in [0,\mathcal{K}]$. Denote by $\mathcal{T}_0=\mathcal{T}_0(\alpha,\vartheta,q,\mathcal{K})$ a uniform upper bound on the expected running time of~\nameref{alg:FPTS} for $f(0)\in[0,\mathcal{K}]$.
\end{asm*}

A possible choice of~\nameref{alg:FPTS} is~\cite[Alg.~1]{cazares2023fast} 
with a bound $\mathcal{T}_0$ given in~\cite[Thm~2.1]{cazares2023fast}.
Denote by $\Exp(\Lambda_r)$ the exponential law with mean $1/\Lambda_r$. If $\Lambda_r=0$ (i.e. $Z$ equals a truncated tempered stable process), the corresponding random variable equals $\infty$ a.s. 
We can now state our main simulation algorithm.

\begin{breakablealgorithm}
\caption{Simulation of the triplet $(\tau_c^Z,Z_{\tau_c^Z-},Z_{\tau_c^Z})$ of a subordinator with L\'evy measure in~\eqref{eq:Levy_measure_description}}
\label{alg:triplet_Z}
\begin{algorithmic}[1]
\Require{Parameters $\alpha,\rho\in(0,1)$, $\vartheta\in(0,\infty)$, $r\in(0,\infty]$, $q\in[0,\infty)$, finite measure $\lambda_r$ in~\eqref{eq:Levy_measure_description} and non-increasing function $c$}
\State{Set $(T,U,V)\gets(0,0,0)$, $b(t)=\min\{c(t),r\rho\}$, $\theta=\vartheta \Gamma(1-\alpha)/\alpha$ and generate $D\sim\Exp(\Lambda_r)$}
\Repeat\label{step:fpe_truncated_tempered_stable}{~generate $(T',U',V')\sim \mathcal{L}\left(\tau_{b}^{Y} ,Y_{\tau_{b}^{Y}-}, Y_{\tau_{b}^{Y}}\right)$ via Algorithm~\ref{alg:triplet_Y}}
        \If{$D > T'$} \label{step:dichotomy}
        \Comment{$Y$ crosses $b$ before $Q$ jumps}
            \State{Set $(T,U,V)\gets(T+T', V+U', V+V')$} \label{step:main_4}
            \State{Set $D\gets D-T'$ and $b(t)=\min\{c(t+T)-V,r\rho\}$}
            \label{step:main_5}
         \ElsIf{$D \les T'$}
            \Comment{$Q$ jumps before $Y$ crosses $b$}
            \State{Generate $W\sim \mathcal{L}\left(Y_D|\{Y_D<b(D)\}\right)$ via Algorithm~\ref{alg:small_tempered_stable} and $J\sim\lambda_r/\Lambda_r$}\label{step:small_tempered_stable}
            \State{Set $(T,U,V)\gets(T+D, V+W, V+W+J)$}\label{step:main_14}
            \State{Generate $D\sim\Exp(\Lambda_r)$ and set $b(t)=\min\{c(t+T)-V,r\rho\}$}
        \EndIf
\Until{$b(0)\les 0$}
\State{\Return $(T,U,V)$}
\end{algorithmic}
\end{breakablealgorithm}

The key step in Algorithm~\ref{alg:triplet_Z} is the simulation of the first passage of the truncated tempered stable subordinator, given in the following algorithm.

\begin{breakablealgorithm}
\caption{Simulation of $(\tau_b^Y,Y_{\tau_b^Y-},Y_{\tau_b^Y})$ for a truncated tempered stable subordinator $Y$ and a function $b:[0,\infty)\to[0,\infty)$ bounded by the truncation level $r$}
\label{alg:triplet_Y}
\begin{algorithmic}[1]
\Require{Parameters $\alpha\in(0,1)$, $\vartheta\in(0,\infty)$, $r\in(0,\infty]$, $q\in[0,\infty)$ and $b\les r$}
\Repeat\label{step:fpe_truncated_tempered_stable_alg_2}
        {generate $(T,U,V)\sim \mathcal{L}\left(\tau_{b}^{S} ,S_{\tau_{b}^{S}-} ,S_{\tau_{b}^{S}}\right)$} via~\nameref{alg:FPTS}      \label{step:triplet_tempered_stable}
    \Until{$V-U\les r$}\label{step:accept_jump<r}
    \Comment{$T$ is the first crossing time of $Y$ over $b$}
\State{\Return $(T,U,V)$}
\end{algorithmic}
\end{breakablealgorithm}

The dichotomy in line~\ref{step:dichotomy} of Algorithm~\ref{alg:triplet_Z} is based on whether  the compound Poisson process $Q$ jumped before or after the truncated tempered stable subordinator $Y$ crossed a given boundary. The two events are depicted in Figure~\ref{fig:jump}. If $Y$ crosses first, then, since $Q$ has not jumped, $Z$ equals $Y$ at the time  $\tau_b^Y$ of crossing. The reason why we may sample in Algorithm~\ref{alg:triplet_Y}
the first-passage event of a tempered stable subordinator $S$ (without truncation at $r$) is because the trajectories of $Y$ and $S$ are equal \textit{before} the first crossing time $\tau_b^S$ since the function $b$ is bounded above by $r$
 

\begin{figure}[ht]
\centering
\includegraphics[scale=0.8]{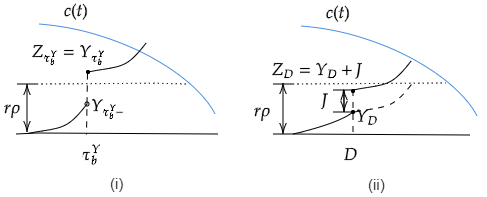}
\caption{In (i),  $Y$ (and hence $Z$) crosses the boundary $b(t):=\min\{c(t),r\rho\}$  before the first jump of $Q$ at time $D$. 
Note that in (ii), on the event $\{D<\tau_b^S\}$, we have $Y_D<b(D)$ but it is possible to have either $Y_D+J>b(D)$ or $Y_D+J\les b(D)$ ($J$ is the magnitude of the jump of $Q$ at time $D$).}
\label{fig:jump}
\end{figure}

The truncation parameter $r$ 
in the representation in~\eqref{eq:Levy_measure_description} of the  L\'evy measure $\nu_Z$
 is not uniquely determined. 
 Let $r_0\in(0,\infty]$ be supremum over all $r\in(0,\infty]$, for which the representation of the L\'evy measure $\nu_Z$ in~\eqref{eq:Levy_measure_description} is valid, i.e. $\lambda_r$ is a measure on $(0,\infty)$ with $\lambda_r(0,\infty)=\Lambda_r\in[0,\infty)$.
 Changing $r$ in~\eqref{eq:Levy_measure_description} clearly modifies the finite activity L\'evy measure $\lambda_r$. 
 Note that the representation in~\eqref{eq:Levy_measure_description} holds for the truncation level $r_0$ with the total mass of the compound  Poisson component $\Lambda_{r_0}\in[0,\infty)$ possibly equal to zero.
 Moreover, for any truncation level $r\in(0,r_0]$, the measure $\lambda_r$ in~\eqref{eq:Levy_measure_description} satisfies
\begin{equation}
\label{eq:decompostion_of_lambda}
\lambda_r(0,\infty)=\Lambda_r
=\Lambda_{r_0}+\int_r^{r_0}\vartheta \me^{-qx}x^{-\alpha-1}\md x.
\end{equation}

Define, for $r>0$ and $u,q\ges 0$, the function
\begin{equation}
\label{eq:upsilon_function}
\Upsilon(u,r,q)
\coloneqq\frac{1}{2}\begin{cases}
u(r\wedge 1),&u+q<r^{-1}\vee 1,\\
1-(r\wedge 1)q+\log((u+q)(r\wedge 1)),&q<r^{-1}\vee 1\les u+q,\\
\log(1+u/q),&r^{-1}\vee 1\les q.
\end{cases}
\end{equation}
Here and throughout the paper, $x\wedge y$ (resp. $x\vee y$) denotes $\min\{x,y\}$ (resp. $\max\{x,y\}$) for any $x,y\in\R$.

\begin{thm}
\label{thm:main_complexity}
Let $Z$
be a driftless subordinator with L\'evy measure given in~\eqref{eq:Levy_measure_description} and $c:[0,\infty)\to[0,\infty)$ 
as in Assumption~\nameref{alg:FPTS}. Then
Algorithm~\ref{alg:triplet_Z} samples the from the law of the random vector $\chi_c^Z$
in~\eqref{eq:first_passage_event_vector}. The running time of Algorithm~\ref{alg:triplet_Z} has moments of all order if~\nameref{alg:FPTS} does. The expected running time of Algorithm~\ref{alg:triplet_Z} is bounded above by
\begin{equation}
\label{eq:main_complexity}
\kappa_{\ref{alg:triplet_Z}}\left(\frac{\mathcal{T}_0(\alpha,\vartheta,q,\min\{c_0,r\rho\})}{1-\rho^\alpha}+\me^{q\min\{c_0,r\rho\}}\right)
\left(\frac{\Lambda_r}{\psi_0+\vartheta\Upsilon(1/(1+c_0),r_0,q)}+\frac{c_0}{r\rho}\right),
\end{equation}
where $\mathcal{T}_0$ denotes the expected running time of~\nameref{alg:FPTS}, $\psi_0:=\int_{(0,\infty)}(1-\me^{-x/(1+c_0)})\lambda_{r_0} (\md x)$ and the constant $\kappa_{\ref{alg:triplet_Z}}$ does not depend on $\alpha\in(0,1)$, $\vartheta\in(0,\infty)$, $c_0\coloneqq c(0)\in(0,\infty)$, $\Lambda_r\in(0,\infty)$, $\rho\in(0,1)$, $r\in(0,r_0]$ or $q\in[0,\infty)$.
\end{thm}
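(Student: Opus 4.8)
The proof has two essentially separate parts: (a) correctness — that Algorithm~\ref{alg:triplet_Z} indeed outputs a sample from $\chi_c^Z$; and (b) the complexity bounds — moments of all orders and the explicit expected-running-time bound~\eqref{eq:main_complexity}. For (a), the key structural fact is that $Z\eqd Y+Q$ with $Y\perp Q$, where $Y$ is the truncated tempered stable subordinator and $Q$ the compound Poisson process with L\'evy measure $\lambda_r$. I would argue by induction on the number of loop iterations, using the strong Markov property at the successive jump times of $Q$ and at the crossing times $\tau_b^Y$. The crucial observations are: before the first jump of $Q$, one has $Z=Y$ pathwise, so on $\{D>\tau_b^Y\}$ the triplet of $Z$ over $b$ coincides with the triplet of $Y$ over $b$ (line~\ref{step:main_4}); and since $b\les r\rho<r$, the process $Y$ agrees with the untruncated tempered stable $S$ up to $\tau_b^S$, which is why Algorithm~\ref{alg:triplet_Y}'s thinning via \nameref{alg:FPTS} (rejecting when the overshoot jump exceeds $r$) correctly samples $(\tau_b^Y,Y_{\tau_b^Y-},Y_{\tau_b^Y})$. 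On the complementary event $\{D\les\tau_b^Y\}$ one needs the increment $Y_D$ conditioned on $\{Y_D<b(D)\}$ (since $Y$ has not yet crossed $b$), supplied by Algorithm~\ref{alg:small_tempered_stable}, plus the independent jump $J\sim\lambda_r/\Lambda_r$ (line~\ref{step:main_14}); one then restarts from the shifted/lowered boundary $c(t+T)-V$ by the Markov property. The loop terminates precisely when the remaining boundary $b(0)\les0$, i.e.\ when $Z$ has crossed the \emph{original} curve $c$, and the accumulated $(T,U,V)$ is by construction $(\tau_c^Z,Z_{\tau_c^Z-},Z_{\tau_c^Z})$. Most of this is deferred to Section~\ref{sec:proof}; here I only sketch it.

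**Complexity — the core estimate.** Let $N$ be the (random) number of iterations of the \textbf{Repeat} loop before termination. Each iteration costs: one call to Algorithm~\ref{alg:triplet_Y} (dominated by calls to \nameref{alg:FPTS}, which by Assumption~\nameref{alg:FPTS} has a finite exponential moment uniformly over boundary heights in $[0,\min\{c_0,r\rho\}]$, since the boundary only ever decreases below its initial value $\min\{c_0,r\rho\}$), possibly one call to Algorithm~\ref{alg:small_tempered_stable}, plus $\Oh(1)$ elementary operations. The number of \nameref{alg:FPTS} calls inside a single Algorithm~\ref{alg:triplet_Y} invocation is geometric with success probability $\p(V-U\les r)\ges 1-\rho^\alpha$ (this is the acceptance event in line~\ref{step:accept_jump<r}; the bound $1-\rho^\alpha$ comes from the scaling of the tempered stable overshoot and the fact that $b\les r\rho$ — this elementary estimate I would state as a lemma or cite from the sequel), which explains the factor $1/(1-\rho^\alpha)$. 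The plan is therefore to bound $\E[\text{total cost}]$ by $\E[N]$ times a uniform per-iteration cost bound of the form $\kappa\bigl(\mathcal{T}_0(\alpha,\vartheta,q,\min\{c_0,r\rho\})/(1-\rho^\alpha)+\me^{q\min\{c_0,r\rho\}}\bigr)$, where the exponential term accounts for Algorithm~\ref{alg:small_tempered_stable}'s cost on the rejection step (sampling a tempered stable increment conditioned to be small; its cost is controlled by $\me^{qb(D)}\les\me^{q\min\{c_0,r\rho\}}$, via the log-concave-density rejection sampler of~\cite{devroye2012note}). For moments of all orders, the same decomposition applies with $\E[N^p]<\infty$ for all $p$ and the per-iteration cost having moments of all orders whenever \nameref{alg:FPTS} does — here one invokes the elementary but crucial Lemma~\ref{lem:comp_cost} (flagged in Subsection~\ref{subsec:infinite_complexity_Chi}) to handle the fact that successive iterations are \emph{not} independent, as the boundary in iteration $k+1$ depends on the output of iteration $k$.

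**Bounding $\E[N]$ — the main obstacle.** This is the heart of the proof and where the two summands $\Lambda_r/(\psi_0+\vartheta\Upsilon(\cdots))+c_0/(r\rho)$ in~\eqref{eq:main_complexity} come from. An iteration ends the loop only when, after the update, $c(t+T)-V\les r\rho$ at $t=0$. Iterations come in two flavours: those where $Q$ jumps ($D\les T'$, decrementing nothing of the budget but adding a jump $J$) and those where $Y$ crosses ($D>T'$, consuming part of the exponential clock $D$ and raising the level by $U'$). For the $c_0/(r\rho)$ term: each "$Y$-crossing" iteration advances the level $V$ by at least $U' = Y_{\tau_b^Y-}$... no — rather, each such iteration lowers the \emph{effective remaining barrier} $b(0)$; the point is that between consecutive $Q$-jumps the barrier drops by a multiplicative/additive amount controlled so that at most $\Oh(c_0/(r\rho))$ such iterations can occur (since the barrier starts at $\le c_0$ and each $Y$-crossing consumes a chunk of size $\asymp r\rho$ of "barrier room", as the truncated crossing overshoot is $\le r$). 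For the $\Lambda_r/(\psi_0+\vartheta\Upsilon)$ term: the number of $Q$-jumps before termination is $\Lambda_r$ times the time horizon $\tau_c^Z$, and one controls $\E[\tau_c^Z]$ (or rather the relevant functional) from below-away-from-zero by the "speed" of $Z$ — precisely, $\psi_0+\vartheta\Upsilon(1/(1+c_0),r_0,q)$ is (a lower bound on) the Laplace exponent of $Z$ at argument $1/(1+c_0)$, split into the $\lambda_{r_0}$-part $\psi_0$ and the tempered-stable-part, the latter being exactly what the three-case function $\Upsilon$ in~\eqref{eq:upsilon_function} captures. I would obtain $\E[\tau_c^Z]\les (1+c_0)\big/\big(\psi_0+\vartheta\Upsilon(1/(1+c_0),r_0,q)\big)$ via a standard optional-stopping / Wald-type argument on $\me^{-u Z_t+t\psi_Z(u)}$ evaluated at $\tau_c^Z$ with $u=1/(1+c_0)$, using that $Z_{\tau_c^Z}\les c_0 + (\text{overshoot})$ and the overshoot's exponential moment (Appendix~\ref{sec:hitting_times}). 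Putting the two bounds together and absorbing all numerical constants into $\kappa_{\ref{alg:triplet_Z}}$ gives~\eqref{eq:main_complexity}. The genuinely delicate points I expect to fight with are (i) establishing the acceptance probability lower bound $1-\rho^\alpha$ uniformly in $q$ and the boundary, and (ii) the dependence bookkeeping in Lemma~\ref{lem:comp_cost} needed to turn per-iteration exponential-moment bounds into all-order moment bounds for the total, since the iterates form a dependent (though Markovian) chain with a random, data-dependent number of steps.
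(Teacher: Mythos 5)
Your proposal is substantially correct and follows the paper's overall architecture: correctness by the strong Markov property; per-iteration cost control via the acceptance-probability bound $1-\rho^\alpha$ for Algorithm~\ref{alg:triplet_Y} and the $\me^{q\min\{c_0,r\rho\}}$ bound for Algorithm~\ref{alg:small_tempered_stable} (Lemma~\ref{lem:local_algs_complexity}); the dependent-iteration bookkeeping Lemma~\ref{lem:comp_cost}; and the split of the iteration count into a part bounded by $\lceil c_0/(r\rho)\rceil$ (each $Y$-crossing that does not terminate the loop raises $V$ by at least $r\rho$, so at most $\lceil c_0/(r\rho)\rceil$ such crossings occur) and a part equal to the number $K$ of jumps of $Q$ before $\tau_c^Z$.

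The one genuine difference is in how $\E[K]$ is controlled. You propose to write $\E[K]=\Lambda_r\,\E[\tau_c^Z]$ by Wald/optional stopping on $N^Q_t-\Lambda_r t$ (a martingale in the joint filtration of $Y$ and $Q$, with $\tau_c^Z$ a stopping time and $\E[\tau_c^Z]\les\E[\tau_c^Y]<\infty$), and then bound $\E[\tau_c^Z]$ directly via the Laplace exponent $\psi_Z(u)$ of the full process $Z$, evaluated at $u=1/(1+c_0)$. Expanding $\psi_Z(u)=\psi_0+\vartheta\int_0^{r_0}(1-\me^{-ux})\me^{-qx}x^{-\alpha-1}\md x$ via~\eqref{eq:decompostion_of_lambda} and using Lemma~\ref{lem:bound_psi_u} then produces the denominator $\psi_0+\vartheta\Upsilon(1/(1+c_0),r_0,q)$ in one stroke. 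The paper instead derives two separate bounds on $\E[K]$ — one via $[0,\tau_c^Z]\subset[0,\tau_c^Y]$ (exploiting $Y\perp Q$ so that, conditionally on $\tau_c^Y$, $K$ is dominated by a Poisson variable, with Lemma~\ref{lem:tau}(a) applied to $Y$), and one via $[0,\tau_c^Z]\subset[0,\tau_c^Q]$ and the jump chain of $Q$ (Lemma~\ref{lem:tau}(b)) — and combines them with $\min\{1/x,1/y\}\les 2/(x+y)$. Your route is shorter and avoids the $\min$-combination, whereas the paper's route deliberately sidesteps any dependence between $K$ and the stopping time by conditioning on the $Q$-independent $\tau_c^Y$, and, as noted in Remark~5.4, makes visible which of the two mechanisms (the infinite-activity speed of $Y$ vs. the drift of the embedded random walk in $Q$) saves the bound in each asymptotic regime. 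Both yield~\eqref{eq:main_complexity} up to the absorbed constant. One small slip: the numerator of your $\E[\tau_c^Z]$ bound should be $\me^{c_0/(1+c_0)}\les\me$, not $1+c_0$; this does not affect the final bound since it is absorbed into $\kappa_{\ref{alg:triplet_Z}}$.
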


The proof of Theorem~\ref{thm:main_complexity} is given in Section~\ref{sec:proof} below. 
We assume in Theorem~\ref{thm:main_complexity} and throughout the paper that the complexity of the simulation of the jump from the law $\lambda_r/\Lambda_r$ does not depend on $r$ and the running time of this simulation has a finite exponential moment. Typically, simulation from $\lambda_r/\Lambda_r$ will have constant complexity (e.g. inversion of the distribution function).

Note that
$\psi_0>0$ if and only if $\Lambda_{r_0}>0$ (i.e. $Z$
is not truncated tempered stable), making the fraction
$1/(\psi_0+\vartheta\Upsilon(1/(1+c_0),r_0,q))$ 
 in~\eqref{eq:main_complexity} bounded away from zero and hence of constant order in any regime. 
In addition, $\Upsilon$ satisfies the following limits $\lim_{r\to0}\Upsilon(u,r,q)/r=u/2$, $\lim_{q\to\infty}\Upsilon(u,r,q)q= u/2$ and $\lim_{u\to 0}\Upsilon(u,r,q)/u=(q^{-1}\wedge r\wedge 1)/2$. In Theorem~\ref{thm:main_complexity}, the dependence in $\rho$ is via the coefficients $1/\rho$ and $1/(1-\rho^\alpha)$. This suggests that $\rho$ should be chosen away from $0$ and $1$. In particular, $\rho=1/2$ is a reasonable choice. 
In practice, a value different from $1/2$ might perform slightly better and should be chosen on a case-by-case basis, as a function of all other parameters.

\begin{cor}
\label{cor:time_complexity}
If \nameref{alg:FPTS} is given by~\cite[Alg.~1]{cazares2023fast}, with the choice $\rho=1/2$ and $r=\min\{2\alpha/q,r_0\}$, the expected running time $\mathcal{T}(\alpha,\vartheta,q,c_0)$ of Algorithm~\ref{alg:triplet_Z} is bounded above by 
\[
\kappa_2
\big((1-\alpha)^{-3}+|\log\alpha|+\log N\big)
\left(\frac{\alpha\Lambda_{r_0}
    +\vartheta\1_{\{2\alpha/q<r_0\}}q^\alpha}{\psi_0+\vartheta\Upsilon(1/(1+c_0),r_0,q)}
    +c_0q\right)/\alpha^3
\]
where the constant $\kappa_2$ does not depend  on any of the quantities: $\alpha\in(0,1)$, $\vartheta\in(0,\infty)$, $c_0=c(0)\in(0,\infty)$, $q\in(0,\infty)$ or $N\in\N$. Moreover, the asymptotic behaviour in~\eqref{eq:asymptotic-complexity} above holds.
\end{cor}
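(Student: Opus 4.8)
The plan is to specialise the general bound in Theorem~\ref{thm:main_complexity} to the case where \nameref{alg:FPTS} is Algorithm~1 of~\cite{cazares2023fast}, using the explicit expected-running-time bound $\mathcal{T}_0(\alpha,\vartheta,q,\mathcal{K})$ supplied by~\cite[Thm~2.1]{cazares2023fast}, and then to substitute the prescribed parameter values $\rho=1/2$ and $r=\min\{2\alpha/q,r_0\}$ into~\eqref{eq:main_complexity} and simplify. First I would record the shape of the tempered-stable bound from~\cite[Thm~2.1, Cor.~2.2]{cazares2023fast}: up to elementary-operation costs it is of the form $\mathcal{T}_0(\alpha,\vartheta,q,\mathcal{K})=\Oh\big(((1-\alpha)^{-3}+|\log\alpha|+\log N)\,\alpha^{-2}\big)$ uniformly over the relevant range of $\mathcal{K}$ (here $N$ is the discretisation/lookup parameter in that algorithm). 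Note that with $r=\min\{2\alpha/q,r_0\}$ we have $q\min\{c_0,r\rho\}\le q r\rho\le q\cdot(2\alpha/q)\cdot(1/2)=\alpha<1$, so the factor $\me^{q\min\{c_0,r\rho\}}$ in~\eqref{eq:main_complexity} is bounded by $\me$ and may be absorbed into the constant; likewise $1/(1-\rho^\alpha)=1/(1-2^{-\alpha})\asymp 1/\alpha$ as $\alpha\to0$ and is bounded as $\alpha\to1$, contributing the remaining factor $1/\alpha$ that upgrades $\alpha^{-2}$ to $\alpha^{-3}$ in the displayed bound.

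Next I would handle the second parenthesis in~\eqref{eq:main_complexity}, namely $\Lambda_r/(\psi_0+\vartheta\Upsilon(1/(1+c_0),r_0,q))+c_0/(r\rho)$. For the second term, $c_0/(r\rho)=c_0/(\tfrac12\min\{2\alpha/q,r_0\})=\max\{c_0 q/\alpha,\,2c_0/r_0\}$; since $r_0$ is fixed the term $2c_0/r_0$ is $\Oh(c_0)$ and is dominated (for the purpose of the stated bound, up to the constant and to the $1/\alpha^3$ prefactor) by $c_0 q/\alpha$, giving the $c_0 q$ contribution inside the bracket after pulling out $1/\alpha^3$. For the first term I would use~\eqref{eq:decompostion_of_lambda}: $\Lambda_r=\Lambda_{r_0}+\int_r^{r_0}\vartheta\me^{-qx}x^{-\alpha-1}\md x$ when $r<r_0$, and $\Lambda_r=\Lambda_{r_0}$ when $r=r_0$ (i.e.\ when $2\alpha/q\ge r_0$). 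In the former case one bounds $\int_{2\alpha/q}^{r_0}\vartheta\me^{-qx}x^{-\alpha-1}\md x\le\vartheta\int_{2\alpha/q}^{\infty}\me^{-qx}x^{-\alpha-1}\md x$; substituting $y=qx$ this equals $\vartheta q^{\alpha}\int_{2\alpha}^{\infty}\me^{-y}y^{-\alpha-1}\md y$, and the remaining integral $\int_{2\alpha}^{\infty}\me^{-y}y^{-\alpha-1}\md y$ is $\Oh(\alpha^{-1})$ as $\alpha\to0$ (since near $0$ the integrand is $\asymp y^{-\alpha-1}$, whose integral from $2\alpha$ contributes $\tfrac{1}{\alpha}(2\alpha)^{-\alpha}$, bounded times $1/\alpha$) and is $\Oh(1)$ as $\alpha\to1$; hence this piece is $\Oh(\vartheta q^{\alpha}/\alpha)=\Oh(\vartheta\1_{\{2\alpha/q<r_0\}}q^{\alpha}/\alpha)$, matching the numerator $\alpha\Lambda_{r_0}+\vartheta\1_{\{2\alpha/q<r_0\}}q^\alpha$ once the common $1/\alpha$ is extracted into the $1/\alpha^3$ prefactor. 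Collecting these estimates and choosing $\kappa_2$ to absorb the absolute constants and the elementary-operation costs gives the displayed upper bound.

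Finally, to obtain~\eqref{eq:asymptotic-complexity} I would read off the limits from the displayed bound together with the stated limits of $\Upsilon$ (recorded just after Theorem~\ref{thm:main_complexity}) and the fact that $\psi_0>0$ iff $\Lambda_{r_0}>0$. As $\alpha\to0$: the prefactor contributes $|\log\alpha|\alpha^{-3}$ (the $\log N$ and $(1-\alpha)^{-3}$ terms being $\Oh(1)$), and the bracket is $\Oh(1)$ because $\Upsilon(1/(1+c_0),r_0,q)$ is bounded below by a positive constant independent of $\alpha$ and $q^{\alpha}\to1$ — giving $\Oh(|\log\alpha|\alpha^{-3})$. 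As $\alpha\to1$: the prefactor is $\Oh((1-\alpha)^{-3})$ and the bracket is $\Oh(1)$, giving $\Oh((1-\alpha)^{-3})$. As $q\to\infty$ (with $\alpha$ fixed): eventually $2\alpha/q<r_0$, so the numerator in the bracket is $\asymp \vartheta q^{\alpha}$ while, by $\lim_{q\to\infty}\Upsilon(u,r_0,q)q=u/2$, the denominator $\psi_0+\vartheta\Upsilon\asymp\psi_0+\Oh(1/q)$; when $\Lambda_{r_0}>0$ this is bounded below, so the first term is $\Oh(q^{\alpha})$, whereas when $\Lambda_{r_0}=0$ we have $\psi_0=0$ and the ratio is $\asymp q^{\alpha}/(1/q)=q^{1+\alpha}$; the $c_0 q$ term is $\Oh(q)$ in both cases, yielding $\Oh(q(1+\1_{\{\Lambda_{r_0}=0\}}q^{\alpha}))$. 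As $c_0\to\infty$: the $c_0 q$ term dominates and, using $\lim_{u\to0}\Upsilon(u,r_0,q)/u$ finite, the first term stays $\Oh(1)$ (or grows at most linearly, dominated by $c_0q$), giving $\Oh(c_0)$. As $\vartheta\to0$ or $\vartheta\to\infty$: $\vartheta$ enters the bracket only through the ratio $(\alpha\Lambda_{r_0}+\vartheta\1 q^\alpha)/(\psi_0+\vartheta\Upsilon)$ which is bounded above uniformly in $\vartheta\in(0,\infty)$ (the $\vartheta$-linear terms cancel in the limit $\vartheta\to\infty$ and the $\vartheta$-free terms dominate as $\vartheta\to0$, using $\Lambda_{r_0},\psi_0$ independent of $\vartheta$), so both regimes give $\Oh(1)$. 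The main obstacle is the bookkeeping for the small-$\alpha$ behaviour of the truncated-mass integral $\int_{2\alpha/q}^{r_0}\vartheta\me^{-qx}x^{-\alpha-1}\md x$ and its interaction with the $\1_{\{2\alpha/q<r_0\}}$ indicator at the threshold $q=2\alpha/r_0$, i.e.\ verifying that the two cases $r<r_0$ and $r=r_0$ can be subsumed into the single expression $\alpha\Lambda_{r_0}+\vartheta\1_{\{2\alpha/q<r_0\}}q^\alpha$ with a uniform constant; everything else is routine substitution into~\eqref{eq:main_complexity}.
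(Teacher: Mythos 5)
Your proposal follows exactly the same route as the paper's proof: substitute $\rho=1/2$ and $r=\min\{2\alpha/q,r_0\}$ into the bound~\eqref{eq:main_complexity} of Theorem~\ref{thm:main_complexity}, specialise $\mathcal{T}_0$ using~\cite[Thm~2.1]{cazares2023fast}, bound $\Lambda_r$ via~\eqref{eq:decompostion_of_lambda}, control $1/(1-2^{-\alpha})$ by a multiple of $1/\alpha$, and read off the asymptotics~\eqref{eq:asymptotic-complexity}. Your change-of-variables estimate for $\int_r^{r_0}\vartheta\me^{-qx}x^{-\alpha-1}\md x$ is a cosmetic variation on the paper's use of $r^{-\alpha}\le 2q^\alpha/(2\alpha)^\alpha\cdot\alpha^0$, with the same outcome $\Oh(\vartheta\1_{\{2\alpha/q<r_0\}}q^\alpha/\alpha)$.

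There is, however, one concrete slip in your power accounting. You record the bound from~\cite[Thm~2.1]{cazares2023fast} as $\mathcal{T}_0=\Oh\big(((1-\alpha)^{-3}+|\log\alpha|+\log N)\alpha^{-2}\big)$, whereas the paper's proof uses $\mathcal{T}_0\le\kappa''\big((1-\alpha)^{-3}+|\log\alpha|+\log N\big)/\alpha$ (one power of $\alpha^{-1}$, not two, once the factor $\me^{4qr/\alpha}\le\me^8$ is absorbed). If one follows your count literally — first parenthesis $\Oh(\mathcal{T}_0/(1-\rho^\alpha))=\Oh((\ldots)\alpha^{-3})$, then an additional $1/\alpha$ ``extracted'' from $\Lambda_r$ and $c_0/(r\rho)$ into the prefactor — the final exponent becomes $\alpha^{-4}$, not $\alpha^{-3}$. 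The correct bookkeeping is: first parenthesis $\le\kappa(\ldots)/\alpha^2$, and the single extra $1/\alpha$ comes from writing $\Lambda_r/(\ldots)+c_0/(r\rho)\le\big[(\alpha\Lambda_{r_0}+\vartheta\1_{\{2\alpha/q<r_0\}}q^\alpha)/(\ldots)+c_0q\big]/\alpha$. Your domination claim for the term $2c_0/r_0$ by $c_0q/\alpha$ (uniformly in $q$) is asserted rather than justified, but the paper's own proof replaces $c_0/(r\rho)$ by $c_0q/\alpha$ with the same brevity, so you are not worse off than the source on that point.
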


As in~\cite{cazares2023fast}, $N\in\N$ specifies the precision bits used by the computer (typical values of $N$ are $64$ or $128$). The choice of $r=\min\{2\alpha/q,r_0\}$ is discussed in Subsection~\ref{subsec:Implementation}.

\begin{proof}[Proof of Corollary~\ref{cor:time_complexity}]
Since $r\les 2\alpha/q$ and \nameref{alg:FPTS} (i.e.~\cite[Alg.~1]{cazares2023fast})
in Algorithm~\ref{alg:triplet_Z} is used for a boundary function bounded above by $r$, the bound on the expected running time $\mathcal{T}_0$ in~\cite[Thm~2.1]{cazares2023fast} implies
$$\mathcal{T}_0(\alpha,\vartheta,q,\min\{c_0,r\rho\})/\alpha\les \kappa'
\me^{4qr/\alpha}((1-\alpha)^{-3}+|\log\alpha|+\log N)/\alpha^2\les \kappa''((1-\alpha)^{-3}+|\log\alpha|+\log N)/\alpha^2 $$
for constants $\kappa',\kappa''$, not dependent on $\alpha\in(0,1)$, $\vartheta\in(0,\infty)$, $c_0\coloneqq c(0)\in(0,\infty)$, $\Lambda_r\in(0,\infty)$, $q\in(0,\infty)$ or $N\in\N$.
Note that, by~\eqref{eq:decompostion_of_lambda} and the inequality $(2\alpha)^{-\alpha}\les 2$, we have
\begin{equation}
\label{eq:lambda_r}
\Lambda_r
\les\Lambda_{r_0}+ \vartheta\me^{-qr}\int_r^{r_0}x^{-\alpha-1}\md x
\les\Lambda_{r_0}+\vartheta(r^{-\alpha}-r_0^{-\alpha})/\alpha
\les\Lambda_{r_0}+\vartheta\1_{\{2\alpha/q<r_0\}}2q^\alpha/\alpha.
\end{equation} 
Since $\rho=1/2$, the convexity of $1-1/2^\alpha$ yields  $\mathcal{T}_0/(1-\rho^\alpha)\les 2\mathcal{T}_0 /\alpha$.
Theorem~\ref{thm:main_complexity} implies that the expected complexity of Algorithm~\ref{alg:triplet_Z} is bounded above by
\[
\kappa_1'
((1-\alpha)^{-3}+|\log\alpha|+\log N)
\left(\frac{\Lambda_{r_0}+\vartheta\1_{\{2\alpha/q<r_0\}}q^\alpha/\alpha}{\psi_0+\vartheta\Upsilon(1/(1+c_0),r_0,q)}
+c_0q/\alpha\right)/\alpha^2,
\]
where $\kappa_{\ref{alg:triplet_Z}}'=4\me\kappa_{\ref{alg:triplet_Z}}\kappa''$. 
The bound in the corollary follows, which in turn implies the asymptotic behaviour stated in~\eqref{eq:asymptotic-complexity}. 
\end{proof}

\section{Sampling tempered stable random variables conditioned to be small}
\label{sec:alg_small_tempered_stable}

Line~\ref{step:small_tempered_stable} in Algorithm~\ref{alg:triplet_Z} requires an efficient way of sampling a tempered stable law conditioned to be small. We first present an algorithm for sampling a stable law (i.e. the law in~\eqref{eq:def_tempered_stable} with $q=0$) conditioned to be small. 
The stable density has a well known Zolotarev integral representation, see~\eqref{eq:zolotarev_representation} below, featuring the function 
\begin{equation}
\label{eq:zolotarev_density}
\sigma_\alpha(u)
\coloneqq 
\left(\frac{\sin(\alpha\pi u)^{\alpha}\sin((1-\alpha)\pi u)^{1-\alpha}}{\sin(\pi u)}\right)^{\beta+1},
\quad u\in(0,1),
\quad\text{where  $\beta:=\alpha/(1-\alpha)\in(0,\infty)$.}
\end{equation}
It is not hard to prove, see e.g.~\cite[Lem.~4.3]{cazares2023fast}, that $\sigma_\alpha$ is convex, 
making it possible to 
use 
Devroye's  algorithm~\cite{devroye2012note}
 for log-concave densities within Zolotarev's representation. (Note that~\cite[Alg.~11]{cazares2023fast}, 
 is Devroye's algorithm adapted to the special case when the support of the density is the interval $(0,1)$.) 

\begin{breakablealgorithm}
\caption{Simulation from the law of $S_t|\{S_t<s\}$ where $S_t$ is defined in~\eqref{eq:def_tempered_stable} with $q=0$}
\label{alg:small_stable}
\begin{algorithmic}[1]
\Require{Parameters $\alpha\in(0,1)$, $\theta,t,s>0$}
\State{Set $\beta=\alpha/(1-\alpha)$ and generate $E'\sim\Exp(1)$}
\State{Generate $U$ from the density proportional to $\1_{(0,1)}(u)\exp\big(-\sigma_\alpha(u)(\theta t)^{\beta+1}s^{-\beta}\big)$ via~\cite[Algorithm 11]{cazares2023fast}}
\State{\Return $(s^{-\beta}+(\theta t)^{-\beta-1}E'/\sigma_\alpha(U))^{-1/\beta}$}
\end{algorithmic}
\end{breakablealgorithm}

Via an additional rejection step, we may sample tempered stable increments conditioned to be small.

\begin{breakablealgorithm}
\caption{Simulation from the law of $S|\{S_t<s\}$ where $S_t$ is defined in~\eqref{eq:def_tempered_stable} with $q>0$}
\label{alg:small_tempered_stable}
\begin{algorithmic}[1]
\Require{Parameters $\alpha\in(0,1)$, $\theta,q, t,s>0$}
\Repeat
    {~generate  $E\sim\Exp(1)$}
    \State{Generate $W$ from the law of $S_t |\{S_t < s\}$ via Algorithm~\ref{alg:small_stable} where $S$ is defined in~\eqref{eq:def_tempered_stable} with $q=0$}
\Until{$E>qW$}
\State{\Return $W$}
\end{algorithmic}
\end{breakablealgorithm}

The expected running time of both algorithms is as follows.

\begin{prop}
\label{prop:small_temperd_stable}
Let $S_t$ follow the law in~\eqref{eq:def_tempered_stable}.
Algorithms~\ref{alg:small_stable} and~\ref{alg:small_tempered_stable}
sample from the law of $S_t|\{S_t<s\}$ for $q=0$ and $q>0$, respectively, with average running time bounded above by a multiple $5\me^{qs}$.
\end{prop}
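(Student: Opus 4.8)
The plan is to treat correctness and complexity separately, and within each to handle the stable case $q=0$ (Algorithm~\ref{alg:small_stable}) first and then bootstrap to the tempered case $q>0$ (Algorithm~\ref{alg:small_tempered_stable}) via an exponential tilting.

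\textbf{Correctness of Algorithm~\ref{alg:small_stable}.} Starting from the Zolotarev density representation~\eqref{eq:zolotarev_representation}, I would first rewrite it as the distributional identity $S_t\eqd\big((\theta t)^{-\beta-1}E'/\sigma_\alpha(U)\big)^{-1/\beta}$ for independent $U\sim\Unif(0,1)$ and $E'\sim\Exp(1)$, where $\beta=\alpha/(1-\alpha)$ and $(\beta+1)/\beta=1/\alpha$ (equivalently, Kanter's representation, with $\sigma_\alpha$ already carrying the exponent $\beta+1$ built into~\eqref{eq:zolotarev_density}). Since $x\mapsto x^{-\beta}$ is decreasing, the event $\{S_t<s\}$ equals $\{E'>s^{-\beta}(\theta t)^{\beta+1}\sigma_\alpha(U)\}$; conditioning on it and applying the lack-of-memory property of $E'$ fibrewise in $U$ gives: (i) the conditional density of $U$ is proportional to $\1_{(0,1)}(u)\exp\big(-\sigma_\alpha(u)(\theta t)^{\beta+1}s^{-\beta}\big)$, which is exactly the density drawn in line~2 and is log-concave because $\sigma_\alpha$ is convex (\cite[Lem.~4.3]{cazares2023fast}), so \cite[Alg.~11]{cazares2023fast} applies; and (ii) conditionally on $U$, the variable $E'-s^{-\beta}(\theta t)^{\beta+1}\sigma_\alpha(U)$ is again $\Exp(1)$, which is the $E'$ generated in line~1. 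Substituting this decomposition of $E'$ back into the representation collapses the output to $(s^{-\beta}+(\theta t)^{-\beta-1}E'/\sigma_\alpha(U))^{-1/\beta}$, which therefore has the law of $S_t\mid\{S_t<s\}$.

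\textbf{Correctness of Algorithm~\ref{alg:small_tempered_stable}.} The Laplace transform in~\eqref{eq:def_tempered_stable} gives $\E[\me^{-uS_t}]=\me^{\theta t q^\alpha}\E[\me^{-(u+q)S_t^{(0)}}]$, so the density of the tempered variable $S_t$ equals $\me^{\theta t q^\alpha}\me^{-qx}$ times the stable ($q=0$) density, and hence the density of $S_t\mid\{S_t<s\}$ is proportional to $\me^{-qx}$ times that of $S_t^{(0)}\mid\{S_t^{(0)}<s\}$. Because $\me^{-qx}\les 1$ on $[0,\infty)$, the rejection loop — draw $W$ from $S_t^{(0)}\mid\{S_t^{(0)}<s\}$ via Algorithm~\ref{alg:small_stable}, accept when $E>qW$ with $E\sim\Exp(1)$, i.e.\ with conditional probability $\me^{-qW}$ — returns precisely a sample from $S_t\mid\{S_t<s\}$; the verification is a one-line density identity.

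\textbf{Complexity and main obstacle.} For Algorithm~\ref{alg:small_stable} I would invoke the fact that Devroye's log-concave sampler \cite[Alg.~11]{cazares2023fast} has an expected number of iterations bounded by an absolute constant, uniformly over the target density and hence over all parameters, plus $\Oh(1)$ elementary operations; tracking this constant yields the bound $5$ recorded for $q=0$. For Algorithm~\ref{alg:small_tempered_stable} the number of outer passes is geometric with success probability $p=\E\big[\me^{-qW}\mid S_t^{(0)}<s\big]$, and since $W<s$ almost surely we have $\me^{-qW}\ges\me^{-qs}$, hence $p\ges\me^{-qs}$ and the expected number of passes is at most $\me^{qs}$; multiplying by the per-pass cost (one call to Algorithm~\ref{alg:small_stable} plus $\Oh(1)$) gives the asserted bound $5\me^{qs}$. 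The only genuinely delicate point is the bookkeeping in the first step: writing~\eqref{eq:zolotarev_representation} with the correct exponents and verifying that conditioning on $\{S_t<s\}$ transforms it, via memorylessness applied conditionally on $U$, into exactly the two draws in lines~1--2 of Algorithm~\ref{alg:small_stable}. Once that is done, the tilting argument for $q>0$ and both running-time bounds are routine, the last reducing to the trivial inequality $\me^{-qW}\ges\me^{-qs}$.
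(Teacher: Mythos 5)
Your proof is correct and follows essentially the same route as the paper: the Kanter/Zolotarev representation combined with the memoryless property of the exponential fibrewise in $U$, Devroye's log-concave sampler with acceptance probability at least $1/5$, and a rejection step with bound $\me^{-qW}\ges\me^{-qs}$. The only cosmetic difference is that you spell out the Esscher-tilting identity for the tempered density directly from the Laplace transform, where the paper cites a lemma from \cite{cazares2023fast}; both yield the same one-line verification.
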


\begin{rem}
(I) If the truncation level 
$r$ 
satisfies $r<s$, then
the truncated tempered stable subordinator $Y$ with L\'evy measure $\nu_{r,q}$
in~\eqref{eq:Levy_measure_description},
conditioned to be smaller than $s$,
has the same law as 
$S_t|\{S_t<s\}$. This is because, if we define $Y$ by removing the jumps of $S$ greater than $r$, on the event $\{S_t<s\}$
we have 
$S_t=Y_t$~a.s. 
In particular, Algorithm~\ref{alg:small_tempered_stable} can be used to sample from he law of $Y_t|\{Y_t<s\}$.\\
\noindent (II) At first sight it may appear that the complexity of Algorithm~\ref{alg:small_tempered_stable} grows exponentially with $s$, which would be counter-intuitive since the conditioning distorts the law less if $s$ is large. In fact this is not so: in the final step of the proof of Proposition~\ref{prop:small_temperd_stable} in~\eqref{eq:final_bound_W} below, the variable $W$ is not only bounded above by a constant $s$ but also by $S_t$. We did not explicitly include this ($s$ independent) bound in the statement of the proposition as we are only interested in the small values of $s$.
\end{rem}

\begin{proof}[Proof of proposition~\ref{prop:small_temperd_stable}]
\underline{Case $q=0$.}
Denote by $g_t:\RP\to\RP$ 
the density of $S_t$ (for $t>0$). Note that $\beta/\alpha=\beta+1$ and hence $\sigma_\alpha(0+)\coloneqq\lim_{x\downarrow0}\sigma_\alpha(x)=(1-\alpha)\alpha^{\alpha/(1-\alpha)}$. The density $g_t$ of $S_t$ can be expressed as follows~\cite[Section~4.4]{uchaikin2011chance}:  for any $x>0$ we have 
\begin{equation}
\label{eq:zolotarev_representation}
    g_t(x)=\varphi_\alpha((\theta t)^{-1/\alpha}x)(\theta t)^{-1/\alpha},\quad\text{where}\quad
\varphi_\alpha(x)\coloneqq \beta
    \int_0^1 \sigma_\alpha(u)x^{-\beta-1}\me^{-\sigma_\alpha(u)x^{-\beta}}\md u.
\end{equation}
    The representation for the density shows that, if the uniform $U\sim\mathrm{U}(0,1)$ and exponential $E\sim\Exp(1)$ (with mean one) random variables are independent, then for every $x>0$ we have
\[
\p[S_t\le (\theta t)^{1/\alpha}x]
=\int_0^1 \me^{-\sigma_\alpha(u)x^{-\beta}}\md u
=\p[E\ges\sigma_\alpha(U)x^{-\beta}]
=\p[(\sigma_\alpha(U)/E)^{1/\beta}\les x].
\]
This implies
$S_t
\eqd (\theta t)^{1/\alpha}(\sigma_\alpha(U)/E)^{1/\beta}$
and hence 
\begin{align}
S_t\,|\,\{S_t< s\}
&\eqd (\theta t)^{1/\alpha}(\sigma_\alpha(U)/E)^{1/\beta}\,\big|\, \{\sigma_\alpha(U)(\theta t)^{\beta+1}s^{-\beta}<E\}.\label{eq:joint_small_stable}
\end{align}

The vector $(U,E)$, conditional on the event  $\{\sigma_\alpha(U)(\theta t)^{\beta+1}s^{-\beta}<E\}$, 
has a joint density $f_{U,E}(u,x)$,
proportional to the function  $(u,x)\mapsto\me^{-x}\1_{(0,1)}(u)\1_{(\sigma_\alpha(u)(\theta t)^{\beta+1}s^{-\beta},\infty)}(x)$.
The density $f_U(u)$ 
of the marginal law $U\big|\{\sigma_\alpha(U)(\theta t)^{\beta+1}s^{-\beta}<E\}$
is thus proportional 
to
\[
u\mapsto 
 \1_{(0,1)}(u)
\int_{\sigma_\alpha(u)(\theta t)^{\beta+1}s^{-\beta}}^\infty\me^{-x}\md x 
=\1_{(0,1)}(u)\me^{-\sigma_\alpha(u)(\theta t)^{\beta+1}s^{-\beta}}.
\]
The density $f_{E|U}(x|u)$ of $E\big|U, \{\sigma_\alpha(U)(\theta t)^{\beta+1}s^{-\beta}<E\}$
is clearly that of a shifted exponential and 
satisfies 
$f_{U,E}(u,x)=f_U(u)f_{E|U}(x|u)$.
Hence, to sample $S_t\,|\,\{S_t<s\}$ we need only sample $U$ from the marginal density $f_U$, sample an independent $E'\sim\Exp(1)$, set $E=E'+\sigma_\alpha(U)(\theta t)^{\beta+1}s^{-\beta}$ and compute 
\[
S_t=(\theta t)^{1/\alpha}(\sigma_\alpha(U)/E)^{1/\beta}
=(s^{-\beta}+(\theta t)^{-\beta-1}E'/\sigma_\alpha(U))^{-1/\beta}.
\]

In the previously described procedure, the only nontrivial step is the simulation of $U$ with density $f_U$. Since $\sigma_\alpha$ is a convex function~\cite[Lemma 4.3]{cazares2023fast}, we may apply Devroye's rejection sampling algorithm~\cite{devroye2012note} (which does not require the normalisation constant of $f_U$) with acceptance probability no less than $1/5$, see~\cite[Section~4.4]{devroye2012note}.

\noindent \underline{Case $q>0$.}
According to~\cite[Lem.~B.1]{cazares2023fast}, Algorithm~\ref{alg:small_tempered_stable} samples from the law of $S_t|\{S_t<s\}$. Since $W<s$~a.s.,  the acceptance probability is bounded below as follows
\begin{equation}
\label{eq:final_bound_W}
\p[E>qW]=\E[\p[E>qW| W]]
=\E[\me^{-qW}]\ges\me^{-qs}.
\end{equation}
The expected running time of Algorithm~\ref{alg:small_tempered_stable}, proportional to $5/\p[E>qW]$, is bounded above as claimed.
\end{proof}

We conclude this section by noting that a naive sampling method, which simulates 
a stable variable 
$S_t$ until $S_t<s$, can be  highly inefficient in our context.
By~\eqref{eq:zolotarev_representation}, acceptance probability equals
\[
\p[S_t<s]=\p[S_1<t^{-1/\alpha}s]=\int_0^{t^{-1/\alpha}s}\varphi_\alpha((\theta t)^{-1/\alpha}x)(\theta t)^{-1/\alpha}\md x.
\]
Since all derivatives of 
$\varphi_\alpha$ at zero equal $0$, the function $\varphi_\alpha$ increases extremely slowly near zero (for instance, $\varphi_{0.3}(10^{-4})\approx 10^{-20}$), making the acceptance probability tiny for small $s$.
The naive algorithm has finite expected running time for any fixed level $s$ and time $t$. However, in line~\ref{step:small_tempered_stable} of Algorithm~\ref{alg:triplet_Z}, both time and level are random,
making the expected running time possibly infinite, cf. Appendix~\ref{app:Chi}.

\section{Applications}\label{sec:applications}

The implementation of Algorithm~\ref{alg:triplet_Z} in Python and Julia, used in the numerical examples in this section, is in the GitHub repository~\cite{repository}.

\subsection{How to choose the truncation parameter $r$?}
\label{subsec:Implementation}

As discussed in Section~\ref{section:result}, just above Theorem~\ref{thm:main_complexity},
the truncation parameter $r$ in~\eqref{eq:Levy_measure_description}
is in an admissible interval $(0,r_0]$, 
where $r_0$ could be infinite. 
The total mass $\Lambda_r < \infty$ of the compound Poisson component of the subordinator $Z$ satisfies the following: $\Lambda_r\searrow \Lambda_{r_0}\in[0,\infty)$ as $r\rightarrow r_0$
and $\Lambda_r\nearrow \infty$ as $r\rightarrow 0$.
Moreover, by~\eqref{eq:lambda_r} we know 
$\Lambda_r
\les
\Lambda_{r_0}+\vartheta(r^{-\alpha}-r_0^{-\alpha})/\alpha$.
Since the bound on the expected running time in Theorem~\ref{thm:main_complexity}
is proportional to $\Lambda_r$
and the inequality 
$(2\alpha)^{-\alpha}\les 2$ holds 
for all $\alpha\in(0,1)$,
 picking the truncation parameter $r$ proportional to $2\alpha$ in the regime $\alpha\to 0$ makes   $\Lambda_r$ explode at most as fast as $1/\alpha$.
The bound on the expected running time in~\eqref{eq:main_complexity} of Theorem~\ref{thm:main_complexity} suggest that the truncation parameter $r$ should be inversely proportional to 
 the tempering parameter $q$. If this were not the case, for example the term $\me^{q\min\{c_0,r\rho\}}$ in the bound in~\eqref{eq:main_complexity} on the expected complexity would deteriorate (possibly exponentially) as $q$ becomes large.

 A simple choice, satisfying the aforementioned requirements, is to set $r=\min\{r_0,2\alpha/q\}$. 
 This choice of the truncation level, while not optimal, is theoretically sound as discussed above, easy to compute and  appears to perform well in practice, see Figure~\ref{fig:implematation_running_time}.
We note that, in addition to being a truncation parameter in~\eqref{eq:Levy_measure_description}, $r$ in Algorithm~\ref{alg:triplet_Z} is also used to cap the boundary function $b$ in each iteration of Algorithm~\ref{alg:triplet_Z}. This is analogous to the role played by the parameter $R$ in~\cite[Alg.~2]{cazares2023fast}:
 based on the bound in~\cite[Cor.~2.2]{cazares2023fast}, we took $R$ to be  proportional to $\alpha/q$ in~\cite[line~1, Alg.~2]{cazares2023fast}, which is consistent with our choice of $r$ above.

\begin{figure}
\centering
\begin{subfigure}{0.32\textwidth}
    \includegraphics[width=\textwidth]{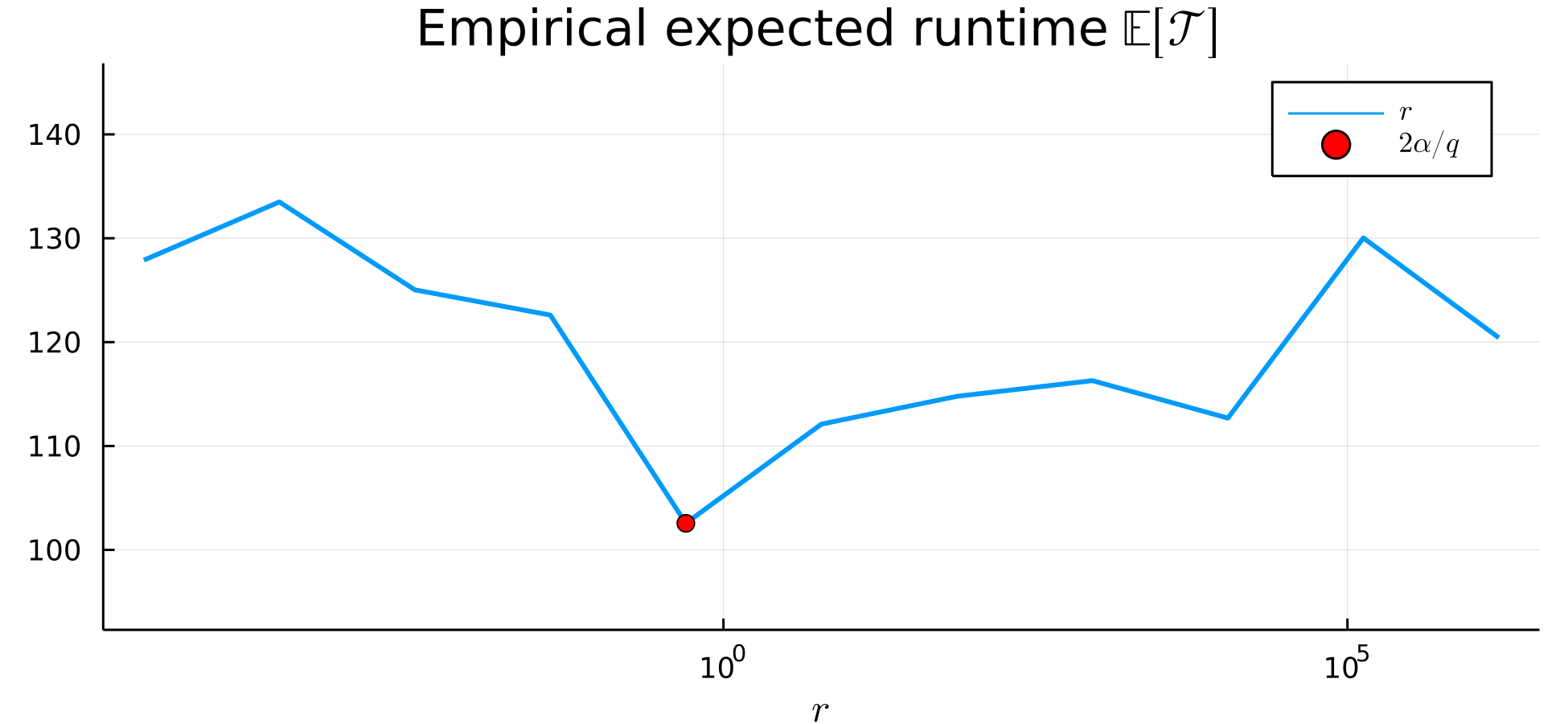}
    \caption{$\alpha=0.25,\, q=1$.}
    \label{fig:r_0=10q=1}
\end{subfigure}
\hfill
\begin{subfigure}{0.32\textwidth}
    \includegraphics[width=\textwidth]{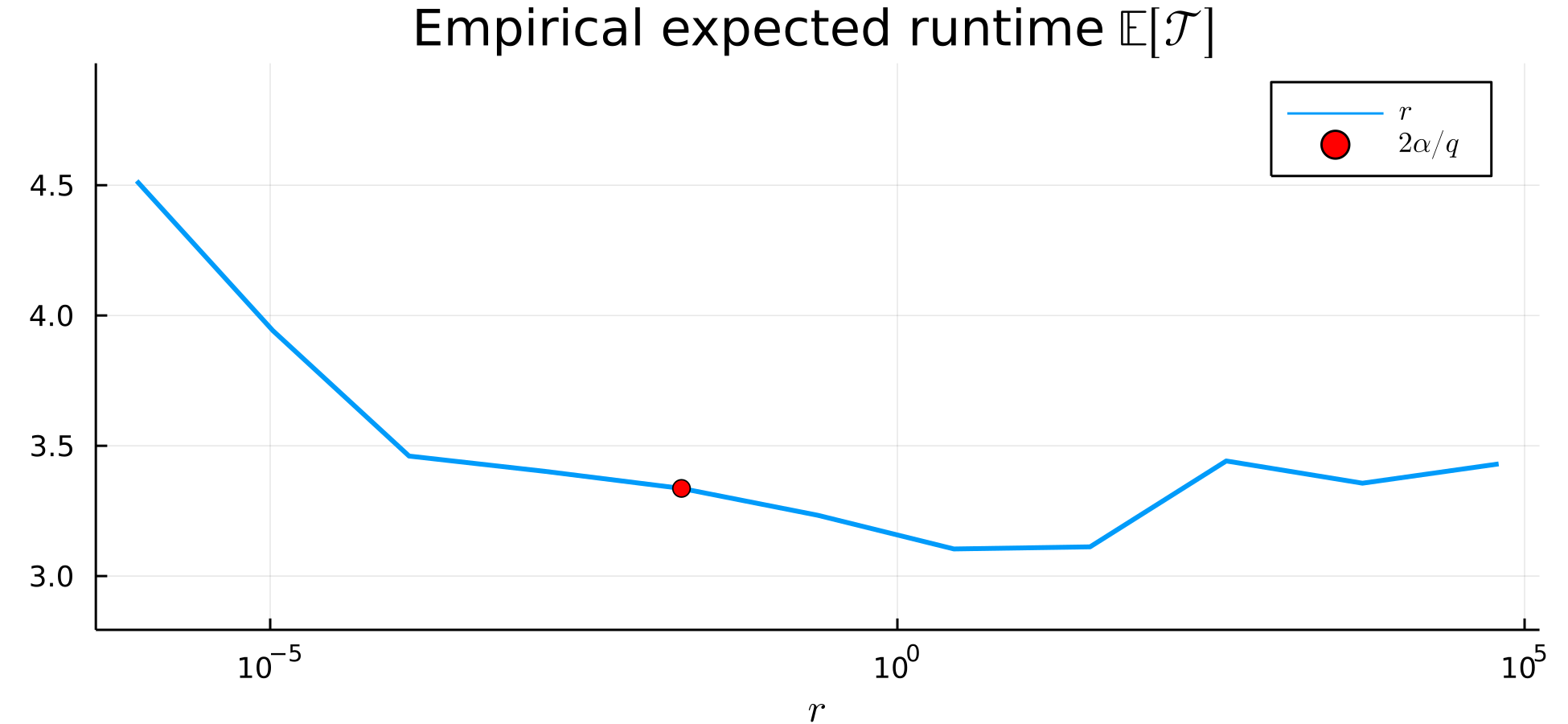}
    \caption{$\alpha=0.95,\, q=100$.}
    \label{fig:r_0=inftyq=10}
\end{subfigure}
\hfill
\begin{subfigure}{0.32\textwidth}
    \includegraphics[width=\textwidth]{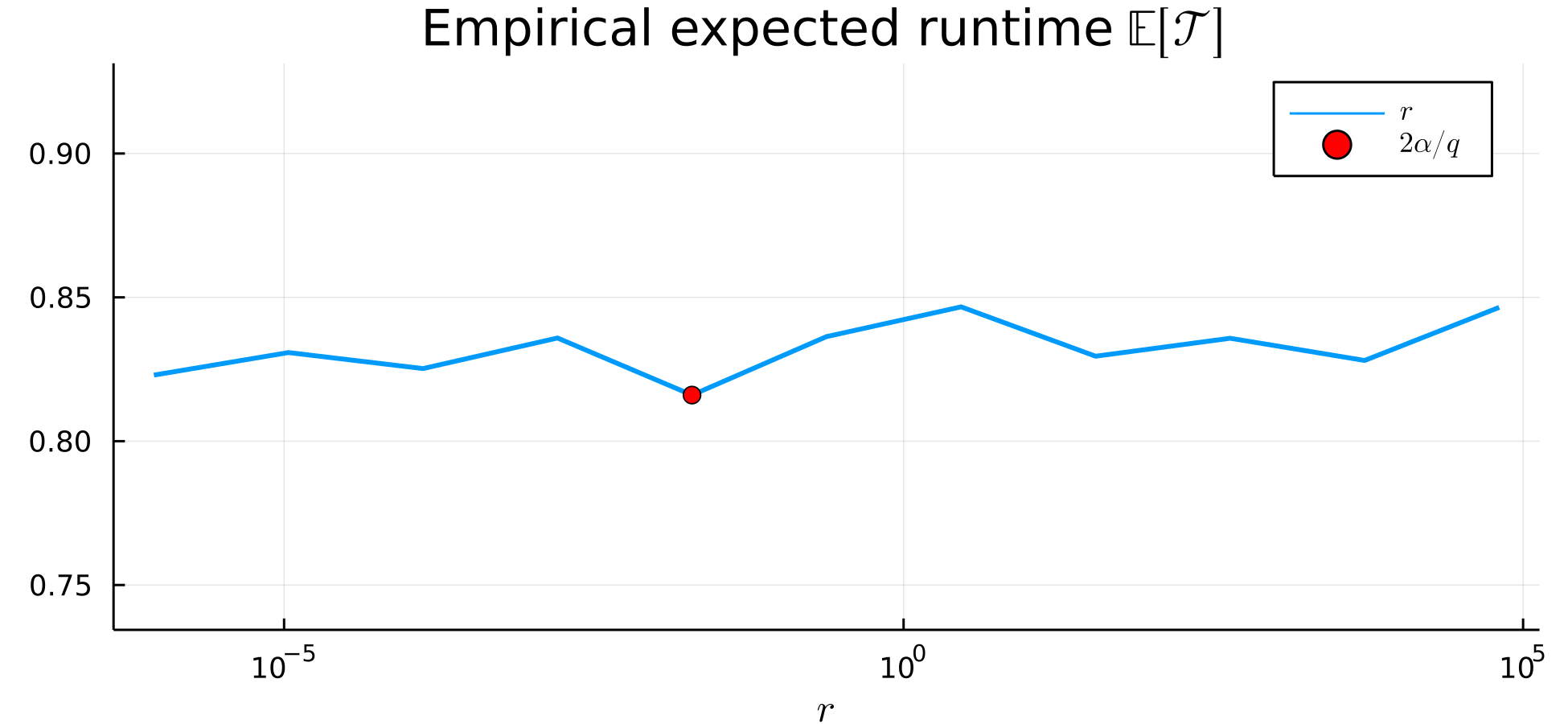}
    \caption{$\alpha=0.98,\, q=100$.}
    \label{fig:r_0=inftyq=100}
\end{subfigure}
        
\caption{The pictures show an implementation of Algorithm~\ref{alg:triplet_Z} with parameters $\vartheta=2$, $r_0=\infty$, $c(t)\equiv 5$, $\lambda_{r_0}(\md x)=\me^{-x}\md x$. The average running time $\E[\mathcal T]$ is measured in seconds taken for every $10^4$ samples.}
\label{fig:implematation_running_time}
\end{figure}


\subsection{Monte Carlo approximation for a fractional partial differential equations}
\label{subsec:Example_FPDE}

Let $g:[0,\infty)\times \R^d\to  \R$ be a continuous function that vanishes at infinity and $\phi:\R^d\to\R$.
For $a<b$, consider the following fractional partial differential equation (FPDE):
\begin{equation}
\label{eq:fpde}
\begin{split}
    (-_tD_a+A_x)u(t,x)&=-g(t,x) \quad (t,x)\in(a,b]\times\mathbb R^d,\\
    u(a,x)&=\phi(x), \quad x\in \mathbb R^d,
\end{split}
\end{equation}
where $A_x$ is a generator of a Feller semigroup on $C_\infty(\mathbb R^d)$ acting on $x$, $\phi\in\text{Dom}(A_x)$, the operator $-_tD_a$ is a generalised differential operator of Caputo type of order less than $1$ acting on the time variable $t\in[a,b]$.
The solution $u$ of the FPDE problem in~\eqref{eq:fpde} exists and its stochastic representation~\cite{hernandez2017generalised},
\begin{equation}
\label{eq:fpde_solution}
u(t,x)=\E\left[\phi(X_{T_t}^x)+\int_0^{T_t}g(\zeta_s^{a,t},X_s^x)\md s \right],
\end{equation}
where $\{X_s^x\}_{s\ges0}$ is the stochastic process generated by $A_x$,  $\{\zeta_s^{a,t}\}_{s\ges0}$ is the decreasing $[a,b]$-valued  process started at $t\in[a,b]$ and generated by $-_tD_a$
and $T_t=\inf\{s>0,\zeta_s^{a,t}<a\}$. 
Note that $T_t$
is the first-passage time of the subordinator 
$(t-\zeta_s^{a,t})_{s\in[0,\infty)}$
over the constant level $t-a$.

 Consider the Ornstein--Uhlenbeck process $X$ on $\R^2$ and let $\phi(x)=x_1+x_2^2$ for $x=(x_1,x_2)\in\R^2$. More precisely, let $X^x$ be the solution of the stochastic differential equation
\begin{equation}
\label{eq:sde_OU}
\md X_s=-\mu X_s\md s+\gamma\md W_s,
\quad X^x_0=x,
\qquad \text{where }
\mu=\big(\begin{smallmatrix}1 0\\ 0 1\end{smallmatrix}\big),\enskip
\gamma =\big(\begin{smallmatrix}1 1\\ 0 1\end{smallmatrix}\big),
\end{equation}
$W$ is a standard Brownian motion on $\R^2$. Let the generalised Caputo operator  $-_tD_a$ be given by
\[
-_tD_af(t):=-\int_0^{t-a}(f(t-s)-f(t))\nu(\md s)+(f(a)-f(t))\int_{t-a}^\infty\nu(\md s).
\]
where 
\[
\nu(\md s)=\frac{1}{-\Gamma(-0.65)}\1\{0<s<1\}\me^{-s}s^{-1.65}\md s+\1\{s\ges 1\}s^{-5}\md s.
\]
The driftelss subordinator $t-\zeta^{a,t}$ has Levy measure of the form~\eqref{eq:Levy_measure_description}
with $\alpha= 0.65$, $q= 1$,  the truncation parameter $r= 1$ and  the finite activity component is given by $\lambda_1(\md s)=\1\{s\ges 1\}s^{-5}\md s$.

For any time $t>0$, the  marginal $X_t$ of the Ornstein--Uhlenbeck process  is Gaussian with known mean and variance. Using Algorithm~\ref{alg:triplet_Z}, together with the representation in~\eqref{eq:fpde_solution}, we simulate the time $T_t$ and plot in
Figure~\ref{fig:fpde_ou} the Monte Carlo estimator (see e.g.~\cite{MR4219829} for the definition of the estimator) to the solution of the FPDE in~\eqref{eq:fpde} at time $t-a=5$.

\begin{figure}
\centering
\includegraphics[width=.49\textwidth]{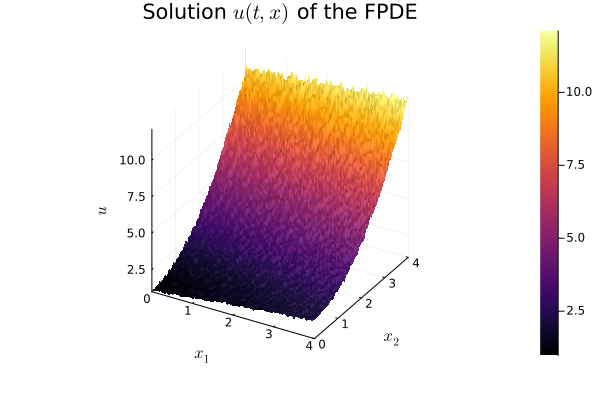}
\caption{A Monte Carlo estimator 
 $u_n(t,x)$ of the solution $u(t,x)$ of the FPDE in~\eqref{eq:fpde_solution} at time $t-a=5$,
 based on the representation in~\eqref{eq:fpde_solution}. This computation is based on  $n=10^4$ samples. 
}
\label{fig:fpde_ou}
\end{figure}

\section{Proof of Theorem~\ref{thm:main_complexity}}
\label{sec:proof}
 Recall that $Z=Y+Q$, where the truncated tempered stable subordinator $Y$ and the compound Poisson process $Q$ are independent with L\'evy  measures $\nu_{r,q}$ and $\lambda_r$ in~\eqref{eq:Levy_measure_description}, respectively.
 When convenient, we will assume that  $Y$ is obtained from 
a tempered stable subordinator $S$ (with L\'evy measure $\nu_q$ given in~\eqref{eq:def_tempered_stable}), by removing all jumps of $S$ of size greater than $r$.

\subsection{Algorithm~\ref{alg:triplet_Z} samples 
the first-passage event of the subordinator $Z$}\label{sec:proof_alg}
We now prove that Algorithm~\ref{alg:triplet_Z}
simulates from the law of the random vector
$\chi_c^Z=\big(\tau_c^Z,
    Z_{\tau_c^Z-},
    Z_{\tau_c^Z}\big)$.
For an auxiliary parameter $\rho\in(0,1)$, the algorithm defines a new boundary function
$b(t):=\min\{r\rho, c(t)\}$, bounded above by the truncation parameter $r$. 
 Algorithm~\ref{alg:triplet_Z} then samples two quantities: 
the first jump time $D$ of the compound Poisson process $Q$ (with L\'evy measure $\lambda_r$, see representation of the L\'evy measure of $Z$ in~\eqref{eq:Levy_measure_description})
following exponential law with mean $1/\Lambda_r$ \textit{and} the vector $\chi_b^{Y}\coloneqq(\tau_b^{Y},Y_{\tau_b^{Y}-},Y_{\tau_b^{Y}})$, where $Y$ is the truncated tempered stable process with L\'evy measure $\nu_{r,q}$ in~\eqref{eq:Levy_measure_description}. 

Algorithm~\ref{alg:triplet_Z} then compares times $\tau_b^Y$ and $D$. Since $D$ and $\tau_b^Y$ are independent and $D$ has a density, the two times are not equal almost surely.  Set $t=\tau_b^{Y}\wedge D$ and consider two cases: 

\noindent \textbf{(i)} If $t=\tau_b^{Y}<D$, then we obtained a sample from $(t,Z_{t-},Z_t)=\chi_b^{Y}$, where $Z_{t-}\les b(t)$ and $Z_t\ges b(t)$. 
Algorithm~\ref{alg:triplet_Z} then redefines
$b(\cdot)\leftarrow\min\{r\rho, c(\cdot+t)-Z_t\}$.
\textbf{(i-a)} If the sampled vector $(t,Z_{t-},Z_t)$ satisfies $b(0)\les 0$ (i.e. $Z_t\ges c(t)$), the algorithm has already produced the desired output $\chi_c^Z$. \textbf{(i-b)} If $b(0)>0$ (i.e. $Z_t< c(t)$), Algorithm~\ref{alg:triplet_Z} sets $D\leftarrow D-t$ and repeats the procedure starting from 
the sampled vector 
$(\tau_b^{Y},Y_{\tau_b^{Y}-},Y_{\tau_b^{Y}})$
 with the redefined boundary function $b$. 
 It is important to note that in the case~(i-b), the undershoot $Y_{\tau_b^{Y}-}$ has been essentially discarded as it does not feature in the final output of the algorithm, cf. lines~\ref{step:main_4} and~\ref{step:main_14} in Algorithm~\ref{alg:triplet_Z} and Figure~\ref{fig:consequtive_crossing}.
 
 \noindent \textbf{(ii)} If $t=D<\tau_b^{Y}$, Algorithm~\ref{alg:triplet_Z} simulates the first jump $J$ of $Q$, which follows the law $\lambda_r(\md x)/\Lambda_r$ \textit{and} $Y_t$ conditional on $\{\tau_b^Y>t\}=\{Y_t\les b(t)\}$. This  yields a sample from $(t,Z_{t-},Z_t)=(D,Y_t,Y_t+J)$ satisfying $Z_{t-}\les b(t)$. Note here that, with probability one, $t$ is not a jump time of $Y$, since $D$ has a density and is independent of $Y$. Thus $Z_t-Z_{t-}=Q_t-Q_{t-}=J$. Moreover,  since we are on the event $\{\tau_b^Y>t\}$ and $t=D$ is the time of the first jump of $Q$, we must have $Q_{t-}=0$ and hence $Z_{t-}=Y_t$. Algorithm~\ref{alg:triplet_Z} then redefines
$b(\cdot)\leftarrow\min\{r\rho, c(\cdot+t)-Z_t\}$. There are again two possibilities.
 \textbf{(ii-a)} If $b(0)\les0$ (i.e. $Z_t\ges c(t)$), the algorithm has already produced the desired vector $\chi_c^Z$. \textbf{(ii-b)} If $b(0)>0$ (i.e. $Z_t< c(t)$), then Algorithm~\ref{alg:triplet_Z} resamples the first jump time $D$ and repeats the procedure starting from 
 $(t,Z_{t-},Z_t)$. As in case~(i-b), the undershoot $Z_{t-}$ is ignored as it does not play a role in the final output, 
 see lines~\ref{step:main_4} and~\ref{step:main_14}
 in Algorithm~\ref{alg:triplet_Z}.
 
It follows from the definition and the update structure of the boundary function $b(t)$ that, in each iteration, Algorithm~\ref{alg:triplet_Z} either produced a sample of $\chi_c^Z$ or reduces the initial value $c_0=c(0)$ by at least $r\rho$. The procedure described above therefore terminates in a finite number of steps (see Section~\ref{sec:proof_time} which describes and proves an upper bound on the expected running time).

Finally we note that  Algorithm~\ref{alg:triplet_Y}
indeed samples from the law of the triplet $(\tau_b^{Y},Y_{\tau_b^{Y}-},Y_{\tau_b^{Y}})$ of the truncated tempered stable process $Y$. Since the boundary function $b(t)$ is smaller than the truncation parameter $r$,  all jumps of the tempered stable process $S$, prior to the crossing time of $b$, are smaller than $r$, making $S_s=Y_s$
for all $s\in[0,\tau_b^S)$. 
The only jump that might be greater than $r$
is the one that occurs at the crossing time $\tau_b^S$, which 
Algorithm~\ref{alg:triplet_Y}
corrects for in line~\ref{step:accept_jump<r},
thus producing a sample from the law of $(\tau_b^{Y},Y_{\tau_b^{Y}-},Y_{\tau_b^{Y}})$.
Similarly, 
as noted in Remark~(I) after Proposition~\ref{prop:small_temperd_stable},
since the boundary function $b$ is bounded by the truncation parameter $r$, line~\ref{step:small_tempered_stable} in Algorithm~\ref{alg:triplet_Z} samples from the conditional law $Y_t|\{Y_t\les b(t)\}$.

Having established that Algorithm~\ref{alg:triplet_Z} samples from the law of $\chi_c^Z=\big(\tau_c^Z,
    Z_{\tau_c^Z-},
    Z_{\tau_c^Z}\big)$,
    in the following section we study its computational complexity.

\subsection{Computational complexity of Algorithm~\ref{alg:triplet_Z}}\label{sec:proof_time}

Recall the parameters in Algorithm~\ref{alg:triplet_Z}:
 $\alpha,\rho\in(0,1)$, $\vartheta\in(0,\infty)$, $r\in(0,\infty]$, $q\in[0,\infty)$, finite measure $\lambda_r$ in~\eqref{eq:Levy_measure_description} and a non-increasing function $c:[0,\infty)\to[0,\infty)$ with $c_0=c(0)>0$.
 We start with preliminary lemmas needed in the proof of Theorem~\ref{thm:main_complexity}.
 Lemma~\ref{lem:local_algs_complexity} bounds the complexity of Algorithms~\ref{alg:triplet_Y} and~\ref{alg:small_tempered_stable}, called within Algorithm~\ref{alg:triplet_Z}.

\begin{lem}
\label{lem:local_algs_complexity}
The following statements hold.
\begin{itemize}
\item[(a)] The expected running time of Algorithm~\ref{alg:triplet_Y}, called in line~\ref{step:fpe_truncated_tempered_stable} of Algorithm~\ref{alg:triplet_Z}, is bounded by a multiple of $\mathcal{T}_0(\alpha,\vartheta,q,\min\{c_0,r\rho\})/(1-\rho^\alpha)$, where $\mathcal{T}_0$ is the bound on the complexity of~\nameref{alg:FPTS}. 
\item[(b)] The running time of Algorithm~\ref{alg:small_tempered_stable}, called in line~\ref{step:small_tempered_stable} of Algorithm~\ref{alg:triplet_Z}, is a constant multiple of a geometric random variable with acceptance probability bounded below by $\me^{-q\min\{c_0,r\rho\}}/5$.
\end{itemize}
The multiplicative constants, suppressed in (a) and (b), 
do not depend on the parameters in Algorithm~\ref{alg:triplet_Z}.
\end{lem}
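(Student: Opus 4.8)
The plan is to treat the two parts separately, since each reduces to understanding the acceptance probability of a rejection loop and then invoking a previously established running-time bound.

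For part~(a): Algorithm~\ref{alg:triplet_Y} repeatedly calls \nameref{alg:FPTS} to sample the first-passage triplet $(\tau_b^S, S_{\tau_b^S-}, S_{\tau_b^S})$ of the untruncated tempered stable subordinator $S$, accepting as soon as the overshoot $V-U = S_{\tau_b^S} - S_{\tau_b^S-}$ is at most $r$. First I would argue that the number of iterations of this loop is a geometric random variable whose success probability is $p := \p[S_{\tau_b^S}-S_{\tau_b^S-}\les r]$, independent across iterations (each call to \nameref{alg:FPTS} uses fresh randomness). Next I would bound $p$ from below uniformly. Since $b \les r\rho$ and $b$ is non-increasing with $b(0)=\min\{c_0,r\rho\}$, the jump that causes the crossing starts from a level $S_{\tau_b^S-}\les b(\tau_b^S)\les r\rho$, so the event $\{V-U\les r\}$ contains the event that the crossing jump has size at most $r$ — actually it contains $\{S_{\tau_b^S} \les r\}$, but more usefully one compares with the jump-size distribution. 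The cleanest route is to recall (or reprove via a short argument using the L\'evy system / compensation formula for $S$) that $p \ges 1 - \rho^\alpha$ for the tempered stable $S$, exactly as is implicitly used in the discussion around line~\ref{step:accept_jump<r} of Algorithm~\ref{alg:triplet_Y} in the introduction; the factor $1-\rho^\alpha$ arises because the undershoot sits below $r\rho$ and the overshoot law, conditioned on the undershoot, has tail governed by $x^{-\alpha}$ so the probability of landing within distance $r(1-\rho)$ of $r\rho$ exceeds $1-\rho^\alpha$. Then the expected number of iterations is $1/p \les 1/(1-\rho^\alpha)$, and the per-iteration expected cost is $\mathcal{T}_0(\alpha,\vartheta,q,\mathcal{K})$ with $\mathcal{K}=\min\{c_0,r\rho\} = b(0)$, using Assumption~\nameref{alg:FPTS} which gives a uniform bound for $f(0)\in[0,\mathcal{K}]$. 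Since the running time has a finite exponential moment (part of Assumption~\nameref{alg:FPTS}), Wald-type reasoning (or a direct geometric-sum estimate) gives that the total expected running time of Algorithm~\ref{alg:triplet_Y} is at most a constant multiple of $\mathcal{T}_0(\alpha,\vartheta,q,\min\{c_0,r\rho\})/(1-\rho^\alpha)$; one should note the need for independence of the iteration count from the per-iteration cost, or else bound using the exponential moment to control the correlation.

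For part~(b): this is essentially a restatement of Proposition~\ref{prop:small_temperd_stable}. Algorithm~\ref{alg:small_tempered_stable} is a rejection loop in which each iteration runs Algorithm~\ref{alg:small_stable} (a further rejection loop with acceptance probability $\ges 1/5$ by Devroye's bound) and then accepts if $E > qW$. By Remark~(I) after Proposition~\ref{prop:small_temperd_stable}, since the capped boundary satisfies $b(D) \les r\rho \les r$ and hence $b(D) < s$ for the relevant truncation, the conditional law $\mathcal{L}(Y_D\mid\{Y_D<b(D)\})$ coincides with $\mathcal{L}(S_D\mid\{S_D<b(D)\})$, so Algorithm~\ref{alg:small_tempered_stable} is applicable. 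The acceptance probability of the outer loop, conditionally on the argument $b(D)$, is $\E[\me^{-qW}]\ges \me^{-qb(D)}$ by the computation in~\eqref{eq:final_bound_W}, and since $b(D)\les b(0) = \min\{c_0,r\rho\}$ this is at least $\me^{-q\min\{c_0,r\rho\}}$; combining with the inner $1/5$ factor gives a geometric acceptance probability bounded below by $\me^{-q\min\{c_0,r\rho\}}/5$. The running time is then a constant (the cost of one iteration) times this geometric variable, as claimed.

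The main obstacle I anticipate is making the $1-\rho^\alpha$ lower bound on the acceptance probability in part~(a) genuinely rigorous and genuinely uniform in all the parameters. The subtlety is that the overshoot/undershoot law of the tempered stable $S$ over the \emph{time-dependent, non-increasing} boundary $b$ is not explicit, so I would need either a scaling/comparison argument reducing to a constant boundary $\equiv r\rho$ (using that a lower constant boundary can only make the relevant jump start from a lower level, which should only help), or a direct argument via the compensation formula: $\p[V-U > r] = \E\big[\int \nu_q(dx)\,\1\{x > r - S_{\tau_b^S-}\wedge(\cdot)\}\cdots\big]$, bounding $\nu_q((y,\infty))/\nu_q((y',\infty))$ for $y' = r\rho$-related thresholds. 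The secondary technical point is the independence-versus-exponential-moment issue when converting "geometric number of calls, each with finite expected cost" into a bound on total expected cost; if the per-call cost is genuinely i.i.d. across iterations (which it is, modulo the fixed argument $b(0)$), this is immediate by Wald, and I would simply note that.
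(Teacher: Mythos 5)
Your overall strategy matches the paper's: part (b) is, as you say, a direct restatement of Proposition~\ref{prop:small_temperd_stable} together with $b(D)\les\min\{c_0,r\rho\}$, and part (a) reduces to bounding the acceptance probability $p=\p[\Delta_b^S\les r]$ of the rejection loop in Algorithm~\ref{alg:triplet_Y} by $1-\rho^\alpha$ and then invoking Wald (the per-iteration costs are i.i.d.\ and $\{N\ges i\}$ is determined by iterations $1,\ldots,i-1$, so no correlation issue arises). The one step you flag as the ``main obstacle'' --- establishing $p\ges1-\rho^\alpha$ rigorously and uniformly --- is exactly where the paper's work lies, and the paper takes the second of your two proposed routes, namely the compensation formula. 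Concretely, with $\Delta_b^S=S_{\tau_b^S}-S_{\tau_b^S-}$,
\[
\p\big[\Delta_b^S>r\big]=\int_{(0,\infty)^2}\1_{\{u\les b(t)\}}\,\nu_q\big([(b(t)-u)\vee r,\infty)\big)\,\md t\,\p[S_t\in\md u],
\]
and, since $0\les b(t)-u\les b(t)\les r\rho$ on the domain of integration, the numerator integrand is at most $\nu_q([r,\infty))$ while the corresponding integrand of $\p[\Delta_b^S>0]$ is at least $\nu_q([r\rho,\infty))$; hence
\[
\p\big[\Delta_b^S>r\big]\les\frac{\nu_q([r,\infty))}{\nu_q([r\rho,\infty))}=\rho^\alpha\,\frac{\int_r^\infty \me^{-qx}x^{-\alpha-1}\md x}{\int_{r}^\infty \me^{-q\rho x}x^{-\alpha-1}\md x}\les\rho^\alpha,
\]
the middle equality being the substitution $y=\rho x$. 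Note that your intuitive gloss (``the probability of landing within distance $r(1-\rho)$ of $r\rho$ exceeds $1-\rho^\alpha$'') is not quite the right mechanism: the bound comes from the \emph{tail ratio} $\nu_q([r,\infty))/\nu_q([r\rho,\infty))$, which is at most $\rho^\alpha$ because $\nu_0$ has $x^{-\alpha}$ tails and tempering only helps, not from the length of the acceptance window. Apart from this presentational point and the fact that the key estimate was sketched rather than proved, your argument is the paper's argument.
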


\begin{proof}
(a) The acceptance probability in 
line~\ref{step:accept_jump<r} of Algorithm~\ref{alg:triplet_Y}
equals 
$\p[ \Delta_b^S \les r]$, where $\Delta_b^S:=S_{\tau_b^S}-
    S_{\tau_b^S-}$ and 
$S$ is a tempered stable subordinator with L\'evy measure $\nu_q(\md x)$ in~\eqref{eq:def_tempered_stable}, proportional to $\me^{-qx}x^{-\alpha-1}\md x$.
It is well-known (see e.g. Eq.~(A.1a) in~\cite[Thm.~A.1]{cazares2023fast})
that 
$$
\p\big[\Delta_b^S> r\big]=\int_{(0,\infty)^2}
    \1_{\{u\les b(t)\}}
    \nu_q([(b(t)-u)\vee r,\infty))\md t\,\p[S_t\in\md u].
$$
Moreover,
$\nu_q([(b(t)-u)\vee r,\infty))\les \nu_q([ r,\infty))$ and 
$\nu_q([b(t)-u,\infty))\ges \nu_q([r\rho,\infty))$, since $b(t)-u\les b(t)\les r\rho$ (by construction in line~\ref{step:fpe_truncated_tempered_stable} of Algorithm~\ref{alg:triplet_Z})
and  $x\mapsto \nu_q([x,\infty))$ is decreasing. Thus we obtain
\begin{align*}
\p\big[\Delta_b^S> r\big]
\les\p\big[\Delta_b^S> r\,\big|\,\Delta_b^S>0\big]
&=\frac{\int_{(0,\infty)^2}
    \1_{\{u\les b(t)\}}
    \nu_q([r,\infty))\md t\,\p[S_t\in\md u]}{\int_{(0,\infty)^2}
    \1_{\{u\les b(t)\}}
    \nu_q([b(t)-u,\infty))\md t\,\p[S_t\in\md u]}\\
&\les \frac{\nu_q([r,\infty))}{\nu_q([r\rho,\infty))}
=\rho^\alpha\frac{\int_r^\infty \me^{-qx}x^{-\alpha-1}\md x}{\int_{r}^\infty \me^{-q\rho x}x^{-\alpha-1}\md x}\les \rho^\alpha,
\end{align*}
implying 
$\p[ \Delta_b^S \les r]=1-\p[ \Delta_b^S > r]\ges 1-\rho^\alpha$. Thus, the expected running time of  Algorithm~\ref{alg:triplet_Y}, called in line~\ref{step:fpe_truncated_tempered_stable} of Algorithm~\ref{alg:triplet_Z},
is bounded above by a multiple of $1/(1-\rho^\alpha)$ as claimed. 

\noindent (b) This is a direct consequence of 
Proposition~\ref{prop:small_temperd_stable}
and the fact $b(D)=\min\{c(D),r\rho \}\les \min\{c_0,r\rho \}$. 
\end{proof}


The following lemma will be used to bound the complexity of Algorithm~\ref{alg:triplet_Z} in terms of the complexities of Algorithms~\ref{alg:triplet_Y} and~\ref{alg:small_tempered_stable}, called within Algorithm~\ref{alg:triplet_Z}. The key feature of Lemma~\ref{lem:comp_cost} is that it allows the random complexities $R_n$
to be dependent (as is the case when running Algorithm~\ref{alg:triplet_Z}).

\begin{lem}
\label{lem:comp_cost}
Let $(R_n)_{n\in\N}$ be a sequence of non-negative random variables adapted to the filtration $(\mathcal{G}_n)_{n\in\N}$ and let $M$ be an a.s. finite $(\mathcal{G}_n)$-stopping time.\footnote{By definition, $\{M=n\}\in\mathcal{G}_n$ for all $n\in\N$.
} If there exists some $C>0$ such that $\E[R_n|\mathcal{G}_{n-1}]\les C$ a.s. for all $n\in\N$ and $\E[M]<\infty$, then $\E[\sum_{n=1}^M R_n]\les C\E[M]$. Moreover, if for some $p\ges 1$ there exist $C'>0$ and $\epsilon>0$, such that  $\E[R_n^{2p}]\les C'$ for all $n\in\N$ and $\E[M^{2p+\epsilon}]<\infty$, then $\E[(\sum_{n=1}^M R_n)^p]<\infty$.
\end{lem}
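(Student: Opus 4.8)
The plan is to handle the two assertions separately, in both cases reducing to a conditioning argument on $\mathcal{G}_{n-1}$ together with $\{M\ges n\}\in\mathcal{G}_{n-1}$ (which holds since $M$ is a $(\mathcal{G}_n)$-stopping time, so $\{M\les n-1\}\in\mathcal{G}_{n-1}$). For the first assertion, write $\sum_{n=1}^M R_n=\sum_{n\in\N}R_n\1_{\{M\ges n\}}$ and apply the tower property: by the Tonelli theorem for non-negative terms,
\[
\E\Big[\sum_{n=1}^M R_n\Big]=\sum_{n\in\N}\E\big[R_n\1_{\{M\ges n\}}\big]=\sum_{n\in\N}\E\big[\1_{\{M\ges n\}}\E[R_n\mid\mathcal{G}_{n-1}]\big]\les C\sum_{n\in\N}\p[M\ges n]=C\,\E[M],
\]
using $\1_{\{M\ges n\}}\in\mathcal{G}_{n-1}$ and the hypothesis $\E[R_n\mid\mathcal{G}_{n-1}]\les C$ a.s.

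For the second assertion I would first dispose of the case $p=1$ (slightly different bookkeeping) and then treat $p>1$. The idea is to split the sum at the deterministic level $M\les k$ versus $M>k$ for a free integer $k$, or more cleanly, to bound $\big(\sum_{n=1}^M R_n\big)^p$ by combining a Hölder/Minkowski step with a Cauchy--Schwarz step to separate the $R_n$'s from $M$. Concretely, I would use
\[
\Big(\sum_{n=1}^M R_n\Big)^p=\Big(\sum_{n\in\N}R_n\1_{\{M\ges n\}}\Big)^p,
\]
and apply the power-mean inequality in the form $\big(\sum_{n=1}^M a_n\big)^p\les M^{p-1}\sum_{n=1}^M a_n^p$ (Jensen for the normalized counting measure on $\{1,\dots,M\}$), giving $\big(\sum_{n=1}^M R_n\big)^p\les M^{p-1}\sum_{n=1}^M R_n^p=M^{p-1}\sum_{n\in\N}R_n^p\1_{\{M\ges n\}}$. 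Taking expectations and applying Cauchy--Schwarz to each term with exponents $2$ and $2$:
\[
\E\Big[\big(\textstyle\sum_{n=1}^M R_n\big)^p\Big]\les\sum_{n\in\N}\E\big[M^{p-1}R_n^p\1_{\{M\ges n\}}\big]\les\sum_{n\in\N}\big(\E[M^{2p-2}R_n^{2p}\1_{\{M\ges n\}}]\big)^{1/2}.
\]
Here I would apply Cauchy--Schwarz once more inside, separating $R_n^{2p}$ from $M^{2p-2}\1_{\{M\ges n\}}$; since $\E[R_n^{2p}]\les C'$ uniformly, this yields a bound of the form $\sqrt{C'}\sum_{n\in\N}\big(\E[M^{4p-4}\1_{\{M\ges n\}}]\big)^{1/4}$, which by Markov's inequality $\p[M\ges n]\les\E[M^{2p+\epsilon}]/n^{2p+\epsilon}$ together with a tail estimate on $\E[M^{4p-4}\1_{\{M\ges n\}}]$ (finite since $4p-4<2p+\epsilon$ for $p$ close to $1$… — here care is needed) gives a convergent series. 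The exponents need to be chosen so that the resulting power of $n$ in the summand exceeds $1$; this is where having the extra $\epsilon$ and the gap between $2p$ and $2p+\epsilon$ is used. A cleaner route that avoids juggling many exponents: split $\sum_{n=1}^M R_n=\sum_{n=1}^{M}R_n\1_{\{M\ges n\}}$ and for each fixed threshold observe $\big(\sum_{n=1}^M R_n\big)^p\1_{\{k\les M<k+1... \}}$ — but the Jensen-plus-double-Cauchy-Schwarz approach above is the one I would write out.

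The main obstacle is the second part: one cannot use the conditional bound $\E[R_n\mid\mathcal{G}_{n-1}]\les C$ (it is not assumed for the $p$-th moment version) and one cannot use independence of $R_n$ from $M$, so the only leverage is the \emph{unconditional} uniform bound $\E[R_n^{2p}]\les C'$ paired with the strong integrability $\E[M^{2p+\epsilon}]<\infty$. The delicate point is choosing the Hölder exponents in the repeated Cauchy--Schwarz/Hölder applications so that (i) the $R_n$ contributions stay uniformly bounded by a power of $C'$, and (ii) the residual moments of $M$ that appear are of order strictly below $2p+\epsilon$, so that the tail sum $\sum_n \p[M\ges n]^{\delta}$ (for the relevant $\delta>0$ coming out of the exponents) converges by a Markov-inequality tail bound. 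I expect the bookkeeping to force $p$ to be handled with the factor of $2$ in $\E[R_n^{2p}]$ used precisely to absorb one Cauchy--Schwarz, leaving exactly enough room in the $M$-moments, which is why the statement is phrased with $2p$ and $2p+\epsilon$ rather than $p$ and $p+\epsilon$.
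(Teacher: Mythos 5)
Your proof of the first assertion is correct and matches the paper's argument exactly (Tonelli/Fubini, then condition on $\mathcal{G}_{n-1}$ using that $\{M\ges n\}=\{M\les n-1\}^c\in\mathcal{G}_{n-1}$).

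For the second assertion there is a genuine gap. Your first step, $\big(\sum_{n=1}^M R_n\big)^p\les M^{p-1}\sum_{n\in\N}R_n^p\1_{\{M\ges n\}}$, is correct and is also the paper's starting point. But the double Cauchy--Schwarz that follows does not close. The first application, $\E\big[M^{p-1}R_n^p\1_{\{M\ges n\}}\big]\les\big(\E[M^{2p-2}R_n^{2p}\1_{\{M\ges n\}}]\big)^{1/2}$, is Cauchy--Schwarz against the constant $1$ and burns half of your available moment budget for nothing; the second application then needs $\E[R_n^{4p}]$ to be bounded, which is not among the hypotheses (you are only given $\E[R_n^{2p}]\les C'$). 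Independently, it also needs a bound on $\E[M^{4p-4}]$, and since $4p-4>2p+\epsilon$ once $p>2+\epsilon/2$, the hypothesis $\E[M^{2p+\epsilon}]<\infty$ does not supply it for general $p\ges1$ — you noticed this yourself, but the issue is not merely a matter of care in choosing exponents; the route genuinely requires strictly stronger assumptions than the lemma grants.

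The missing idea is to get the summability in $n$ \emph{deterministically} before integrating, rather than trying to extract it via repeated Cauchy--Schwarz. On the event $\{M\ges n\}$ one has $M^{p-1}=M^{p+\epsilon/2}/M^{1+\epsilon/2}\les M^{p+\epsilon/2}/n^{1+\epsilon/2}$, so
\[
\E\Big[\Big(\sum_{n=1}^M R_n\Big)^p\Big]
\les\sum_{n\in\N}\E\big[\1_{\{M\ges n\}}M^{p-1}R_n^p\big]
\les\sum_{n\in\N}n^{-1-\epsilon/2}\,\E\big[M^{p+\epsilon/2}R_n^p\big].
\]
Now a \emph{single} Cauchy--Schwarz gives $\E[M^{p+\epsilon/2}R_n^p]\les\big(\E[M^{2p+\epsilon}]\,\E[R_n^{2p}]\big)^{1/2}\les\sqrt{C'\E[M^{2p+\epsilon}]}$, which is exactly calibrated to the hypotheses, and $\sum_n n^{-1-\epsilon/2}<\infty$ finishes the proof. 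This is why the lemma is stated with $2p$ and $2p+\epsilon$: one Cauchy--Schwarz doubles the $p$, and the $\epsilon$ is spent on the deterministic tail estimate, not on a Markov inequality.
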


\begin{proof}
The first claim follows easily from Fubini's theorem and the fact $\E[M]=\sum_{n\in\N} \p[M\ges n]$:
\[
\E\Bigg[\sum_{n=1}^M R_n\Bigg]
=\E\Bigg[\sum_{n=1}^\infty \1_{\{M\ges n\}}R_n\Bigg]
=\sum_{n=1}^\infty \E\left[\1_{\{M\ges n\}}\E[R_n|\mathcal{G}_{n-1}]\right]
\les C\sum_{n=1}^\infty \p[M\ges n]=C\E[M].
\]
Recall that for any $x_1,\ldots,x_n\in[0,\infty)$,
 we have $(\sum_{i=1}^n x_i)^p\les \sum_{i=1}^n n^{p-1} x_i^p$. (Since $p\ges 1$, Jensen's inequality  yields $(\sum_{i=1}^n x_i/n)^p\les \sum_{i=1}^n x_i^p/n$.)  
 Since $\1_{\{M\ges n\}}M^{p-1}\les M^{p+\epsilon/2}/n^{1+\epsilon/2}$,  the Cauchy--Schwarz inequality implies
\begin{align*}
\E\Bigg[\Bigg(\sum_{n=1}^M R_n\Bigg)^p\Bigg]
&= \E\left[\sum_{n=1}^M M^{p-1} R_n^p\right]  \les\sum_{n=1}^\infty \E[\1_{\{M\ges n\}}M^{p-1}R_n^p]
\les\sum_{n=1}^\infty n^{-1-\epsilon/2}\E[M^{p+\epsilon/2}R_n^p]\\
&\les\sum_{n=1}^\infty n^{-1-\epsilon/2}\left(\E[M^{2p+\epsilon}]\E[R_n^{2p}]\right)^{1/2}
\les \sqrt{C'\E[M^{2p+\epsilon}]}\sum_{n=1}^\infty n^{-1-\epsilon/2}<\infty.
\qedhere
\end{align*}
\end{proof}

The following elementary lower bound will be used to bound the expected number of jumps 
(before the crossing time of $Z$)
of the compound Poisson process $Q$.  

\begin{lem}
\label{lem:bound_psi_u}
Pick any $q\in[0,\infty)$, $r\in(0,\infty]$, $\alpha\in(0,1)$, $u\in[0,\infty)$ and recall the definition of $\Upsilon(u,r,q)$
in~\eqref{eq:upsilon_function}.
It holds that
\begin{equation*}
\int_0^r(1-\me^{-ux})\me^{-qx}x^{-\alpha-1}\md x
\ges\Upsilon(u,r,q).
\end{equation*}
\end{lem}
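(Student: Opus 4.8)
The plan is to bound the integral $\int_0^r(1-\me^{-ux})\me^{-qx}x^{-\alpha-1}\md x$ from below by case analysis matching the three branches in the definition~\eqref{eq:upsilon_function} of $\Upsilon(u,r,q)$. The three cases are governed by the relative sizes of $u+q$, $q$, and $r^{-1}\vee 1$, so the first step is to fix which of these regimes we are in and then choose a convenient sub-interval of integration together with an elementary lower bound for the integrand on that sub-interval.

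First I would record two elementary inequalities that do the bulk of the work: (i) $1-\me^{-y}\ges y/2$ for $y\in[0,1]$ (so $1-\me^{-ux}\ges ux/2$ whenever $ux\les 1$), and (ii) $1-\me^{-y}\ges 1/2$ for $y\ges 1$ (so $1-\me^{-ux}\ges 1/2$ whenever $ux\ges 1$); similarly $\me^{-qx}\ges 1/2$ when $qx\les 1$ (and more simply $\me^{-qx}\ges\me^{-1}$, though the factor $1/2$ in $\Upsilon$ suggests using the bound $1-\me^{-qx}\les qx$ in one place rather than lower-bounding $\me^{-qx}$ crudely). In the first branch, where $u+q<r^{-1}\vee 1$, note that for all $x\in(0,r\wedge 1)$ we have $ux\les (u+q)x<1$ and $qx<1$; integrating the bound $(1-\me^{-ux})\me^{-qx}x^{-\alpha-1}\ges (ux/2)\me^{-qx}x^{-\alpha-1}$ over $(0,r\wedge 1)$ and then using $\me^{-qx}\ges 1-qx$ followed by a direct estimate, one should recover $\tfrac12 u(r\wedge 1)$ after discarding a nonnegative remainder; this needs to be done carefully so the clean form $\tfrac12 u(r\wedge 1)$ comes out, possibly by bounding $\int_0^{r\wedge 1} x^{-\alpha}\md x=(r\wedge1)^{1-\alpha}/(1-\alpha)\ges r\wedge 1$ and absorbing the tempering. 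In the third branch, where $r^{-1}\vee 1\les q$, the natural move is to drop the truncation issue (since $\log(1+u/q)$ does not involve $r$) and instead compare with the stable-type integral: bound $x^{-\alpha-1}\ges x^{-1}\me^{-?}$... more precisely use $\me^{-qx}x^{-\alpha-1}\ges \me^{-qx}x^{-1}$ is false in general, so instead I would substitute $y=qx$ and use a known closed form / monotonicity to get $\int_0^r(1-\me^{-ux})\me^{-qx}x^{-\alpha-1}\md x\ges\int_0^\infty(1-\me^{-ux})\me^{-qx}x^{-1}\md x\cdot(\text{something})$; in fact $\int_0^\infty(1-\me^{-ux})\me^{-qx}x^{-1}\md x=\log(1+u/q)$ exactly (Frullani), and $x^{-\alpha-1}\ges x^{-1}$ for $x\les 1$ while the tail $x>1$ is negligible here since $q\ges 1$ forces the main mass near $0$ — but one must handle $x>r$ being excluded, which is why the hypothesis $r^{-1}\les q$ is used. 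The middle branch interpolates and is handled by splitting $(0,r\wedge 1)$ at the point $x_*=1/(u+q)$.

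The main obstacle I anticipate is getting the constants to come out exactly as $\tfrac12$ times the stated expressions rather than merely up to an absolute constant, since the naive bounds $1-\me^{-y}\ges y/2$ and $\me^{-qx}\ges 1/2$ each cost a factor $2$ and could compound. The resolution is likely that in each regime only \emph{one} of these losses is actually incurred — e.g.\ in branch~1 the tempering factor $\me^{-qx}$ is close to $1$ and should be handled by the sharper $\me^{-qx}\ges 1-qx$ and an explicit integration, while in branch~3 the Frullani identity is exact and the only loss is replacing $x^{-\alpha-1}$ by $x^{-1}$ on $(0,1]$ which \emph{increases} nothing. So the careful bookkeeping of which inequality is applied where, so that the cumulative constant is exactly $1/2$, is the delicate part; everything else is routine one-variable calculus.
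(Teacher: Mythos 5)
Your first-branch estimate would fail, and the obstacle you flagged about compounding constant losses is in fact fatal for the organization you chose. You reduce to $\frac{u}{2}\int_0^{s}\me^{-qx}x^{-\alpha}\,\md x$ with $s=r\wedge1$ and hope this is at least $\frac{u}{2}s$, i.e.\ that $\int_0^{s}\me^{-qx}x^{-\alpha}\,\md x\ges s$ whenever $qs<1$. That is false: take $s=1$, $q\to1^-$, $\alpha\to0$, $u$ small; the integral tends to $1-\me^{-1}\approx0.63<1=s$, and your suggested refinement $\me^{-qx}\ges1-qx$ gives $\tfrac{s^{1-\alpha}}{1-\alpha}-\tfrac{qs^{2-\alpha}}{2-\alpha}\to\tfrac12<1$ in the same limit. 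Once the factor $1/2$ has been spent on $1-\me^{-ux}\ges ux/2$, there is no budget left to absorb the tempering factor $\me^{-qx}<1$ by a separate estimate, so the argument does not reach $\Upsilon(u,r,q)=\tfrac12 u(r\wedge1)$.

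The missing idea is a Fubini swap that treats $u$ and $q$ symmetrically, so the $1/2$ is incurred exactly once and the tempering never has to be bounded on its own. After the free lower bound $x^{-\alpha-1}\ges x^{-1}$ on $(0,s)$ (valid since $s\les1$), write $(1-\me^{-ux})\me^{-qx}x^{-1}=\int_q^{q+u}\me^{-xy}\,\md y$ and exchange the order of integration to obtain $\int_q^{q+u}(1-\me^{-sy})y^{-1}\,\md y$. Only now apply $1-\me^{-sy}\ges\tfrac12\min\{sy,1\}$, giving $\tfrac12\int_q^{q+u}\min\{s,y^{-1}\}\,\md y$, and this single elementary integral evaluates directly to all three branches of $\Upsilon(u,r,q)$ using $1/s=r^{-1}\vee1$. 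This replaces your three separate per-regime arguments (each of which would need its own delicate constant-chasing) with one calculation, and is precisely what makes the clean constant $\tfrac12$ come out.
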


\begin{proof}
Denote $s=\min\{r,1\}$. Since $1-\me^{-t}\ges \tfrac{1}{2}\min\{t,1\}$ for $t>0$, we have
\begin{multline*}
\int_0^r(1-\me^{-ux})\me^{-qx}x^{-\alpha-1}\md x
\ges \int_0^s (1-\me^{-ux})\me^{-qx}x^{-1}\md x
= \int_0^s \int_q^{q+u}\me^{-xy}\md y\md x\\
= \int_q^{q+u} (1-\me^{-sy})y^{-1}\md y
\ges \frac{1}{2}\int_q^{q+u} \min\{y^{-1},s\}\md y
=\frac{1}{2}\begin{cases}
    us,&u+q<1/s,\\
    1-sq+\log((u+q)s),&q<1/s\les u+q,\\
    \log(1+u/q),&1/s\les q.
\end{cases}
\end{multline*}
Since $1/s=\max\{r^{-1},1\}$, the lemma follows.
\end{proof}

\begin{proof}[{\textbf{{Proof of Theorem~\ref{thm:main_complexity}}}}]

We now analyse the computational complexity of Algorithm~\ref{alg:triplet_Z}. We start by identifying all the variables sampled by the algorithm in terms of the subordinators  $Y$ and $Z$ and the compound Poisson process $Q$. 

Let  $K\ge 0$ be the number of jumps of the compound Poisson process $Q$
during the time interval $[0,\tau_c^Z]$.
Denote by
$T_1,\ldots,T_K$ the corresponding jump times of $Q$ in this interval and 
set $T_0\coloneqq 0$ and $T_{K+1}\coloneqq\tau_c^Z\ges T_K$. Consider now the consecutive crossing times of $Y$ (over the boundary $b$, defined in Algorithm~\ref{alg:triplet_Z}) between jump times of $Q$. For each $i\in\{0,\ldots,K\}$, set $T_{i,0}\coloneqq T_i$ and recursively define 
\[
T_{i,j+1}:=\inf\big\{t>T_{i,j}\,
    :\,Y_t-Y_{T_{i,j}}
        >\min\{c(t)-Z_{T_{i,j}},r\rho\}\big\},
\quad j=0,1,\ldots
\]
Observe that, for any $i\in\{0,\ldots,K\}$, the sequence $(T_{i,j})_{j\ges 0}$ becomes constant almost surely: this is because at each $j\ges0$,
the inequalities 
$Z_{T_{i,j+1}}-Z_{T_{i,j}}\ges Y_{T_{i,j+1}}-Y_{T_{i,j}} \ges \min\{c(T_{i,j+1})-Z_{T_{i,j}},r\rho\}$ ensure that for some $j_0\in\N$  we have $Z_{T_{i,j_0+1}}\ges c(T_{i,j_0+1})$, at which time the minimum in the definition in the previous display becomes zero, making the sequence of times constant after $j_0$.

Denote by $L_i\coloneqq\sum_{j=1}^\infty \1_{\{T_{i,j}<T_{i+1}\}}\ges 0$ 
the number of times $Y$ crosses the function $b$ 
during the interval  $[T_i,T_{i+1}]$. 
Crucially, for any 
$i\in\{0,\ldots,K\}$,
at each crossing time $T_{i,j}$, satisfying  $T_{i,j}<T_{i+1}$,  line~\ref{step:main_5} of Algorithm~\ref{alg:triplet_Z} reduces the barrier function $b$ by at least $r\rho$.
 Thus, we obtain 
$\sum_{i=0}^K L_i\les\lceil c_0/(r\rho)\rceil$ (here we use the notation $\lceil x\rceil\coloneqq\inf\{n\in\Z:n\ges x\}$). 
Put differently, Algorithm~\ref{alg:triplet_Z} samples the triplet $\chi_b^Y$ at times $T_{i,j}$ in the following sequence of times 
\begin{equation}
    \label{eq:number_intervals}
0=T_0<T_{0,1}...<T_1 < T_{1,1} < \ldots < T_{1,L_1}
< T_2 < T_{2,1} < \ldots 
< T_K < T_{K,1} < \ldots 
< T_{K,L_K} = \tau_c^Z,
\end{equation}
before returning the sample from the law of
 $\chi^Z_c=\chi^Z(\tau_c^Z)$ (see Figure~\ref{fig:consequtive_crossing}).

Based on Lemma~\ref{lem:comp_cost}, we will reduce the analysis of the running time of Algorithm~\ref{alg:triplet_Z} to controlling the number of ``loops'' (i.e. intervals between consecutive times in~\eqref{eq:number_intervals}) $M\coloneqq K+\sum_{i=1}^KL_i$, bounded above by $K+\lceil c_0/(r\rho)\rceil\ges M$, and bounding the number of operations required in each loop, i.e. between times $T_{i,j}$ and $T_{i,j+1}$ or between times $T_{i,L_i}$ and $T_{i+1}$.

\begin{figure}[ht]
\centering
\includegraphics[scale=0.3]{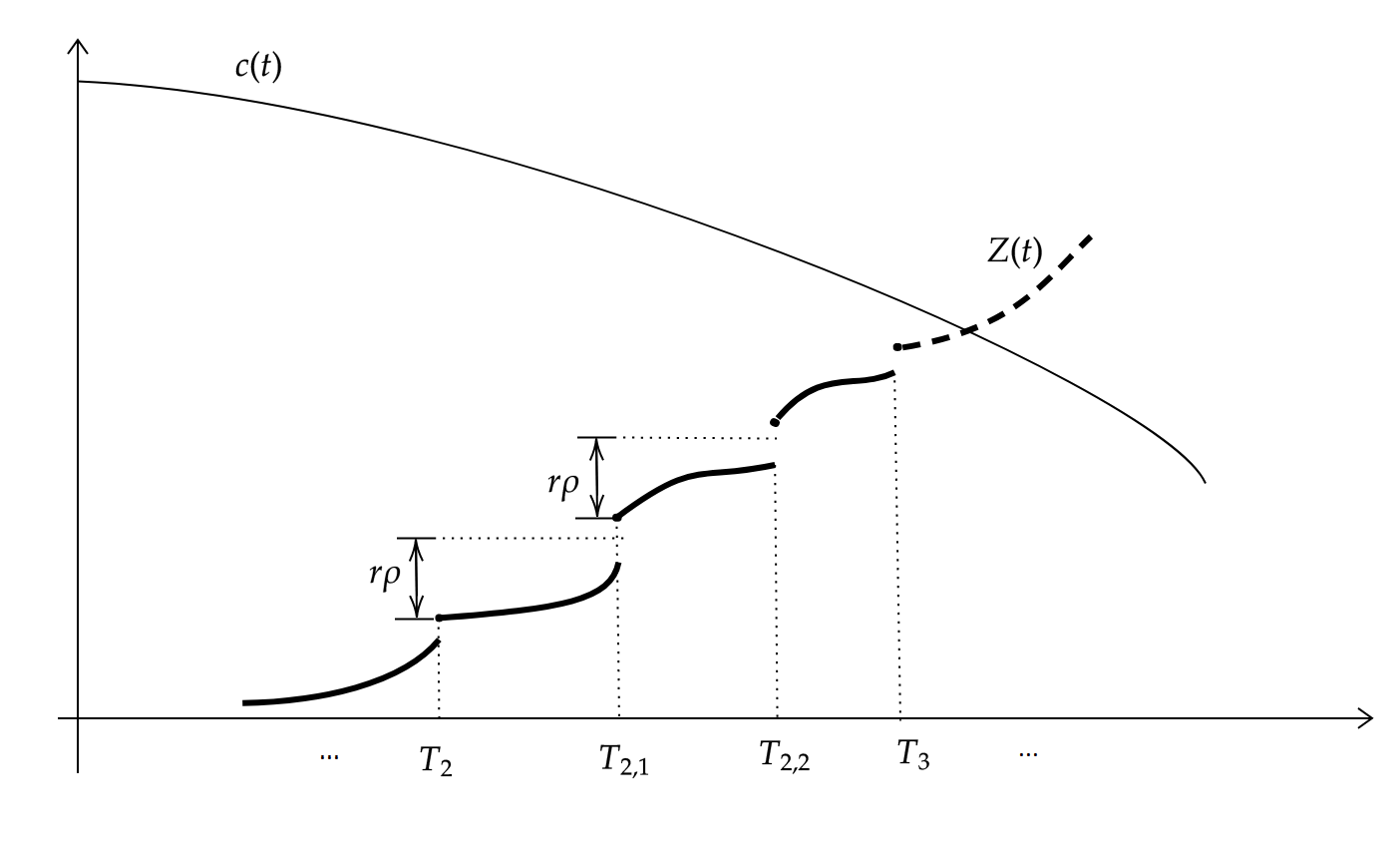}
\caption{Sampling $\chi^Z$ at the times $T_i$ and $T_{i,j}$ }
\label{fig:consequtive_crossing}
\end{figure}


The computational complexity of Algorithm~\ref{alg:triplet_Z} can now be understood as follows. Let $0\eqqcolon \varrho_0<\varrho_1<\ldots<\varrho_M$ be the sequence of stopping times (with respect to the filtration $(\sigma(Y_s,Q_s;s\in[0,t]))_{t\ges 0}$) in~\eqref{eq:number_intervals}. There exist random variables $(G_{n,i})_{n\in\N,i\in\{1,2,3\}}$ and a constant $\kappa_0$ (accounting for deterministic operations and the simulation of elementary distributions), satisfying the following: 
\begin{itemize}[leftmargin=2em, nosep]
    \item $G_{n,1}$ is the random computational cost of sampling from the law of the first passage event of the truncated tempered stable process, with expectation bounded by $\mathcal{T}_0(\alpha,\vartheta,q,\min\{c_0,r\rho\})/\left(1-\rho^\alpha\right)$, where $\mathcal{T}_0$ is the time complexity of~\nameref{alg:FPTS} (the bound on the expected running time of Algorithm~\ref{alg:triplet_Y} is given in Lemma~\ref{lem:local_algs_complexity}(a));
    \item $G_{n,2}$ is the random computational cost of sampling from $\lambda_r/\Lambda_r$ (assume finite moments of all orders); 
    \item $G_{n,3}$ is the random computational cost of sampling from truncated tempered stable variable conditional to be small, with expectation bounded above by a multiple of $5\me^{q\min\{c_0,r\rho\}}$, see Lemma~\ref{lem:local_algs_complexity}(b);
     \item for any $n\in\{1,\ldots,M\}$, the running time of Algorithm~\ref{alg:triplet_Z} between times $\varrho_{n-1}$ and $\varrho_n$ is bounded above by $R_n\coloneqq\kappa_0(1+G_{n,1}+G_{n,2}+G_{n,3})$;
     \item the number of intervals between consecutive distinct times in the sequence in~\eqref{eq:number_intervals}  is bounded above by $K+\lceil c_0/(r\rho)\rceil$, where $K$ is the number of jump times of $Q$ in $[0,\tau_c^Z]$.
\end{itemize}

Note that the total complexity of the algorithm is bounded by $\sum_{n=1}^M R_n$. For any $n\in\N$,  let $\mathcal{G}_n$ be the $\sigma$-algebra generated by $(Y,Q)$ up to time $\varrho_n$. Further note that $M=K+\sum_{n=1}^KL_n$ is a stopping time with respect to $(\mathcal{G}_n)_{n\in\N}$, since $\{M\les n\}=\{Z_{\varrho_n}>c(\varrho_n)\}$. Thus, for $(R_n)_{n\in\N}$ and $M$ to satisfy the assumptions of Lemma~\ref{lem:comp_cost}, it suffices to verify that their corresponding moments are appropriately bounded. 

The sequence $(R_n)_{n\in\N}$ satisfies $\sup_{n\in\N}\E[R_n^p]<\infty$ for all $p\ges 1$ and, by Lemma~\ref{lem:local_algs_complexity}, we have
\begin{equation}
\label{eq:R_n-mean}
\E[R_n|\mathcal{G}_{n-1}]\les C\coloneqq \mathcal{T}_0(\alpha,\vartheta,q,\min\{c_0,r\rho\})/\left(1-\rho^\alpha\right)+5\me^{q\min\{c_0,r\rho\}}+\E[G_{n,2}]\enskip\text{a.s.,}
\quad n\in\N.
\end{equation}
Indeed, $(G_{n,1})_{n\in\N}$ has exponential moments that are uniformly bounded (in $n$) as a function of $c_0$, $r\rho$, $\alpha$, $\vartheta$ and $q$. The sequence $(G_{n,2})_{n\in\N}$ is simply iid with assumed finite moments of all orders (and hence $\E[G_{n,2}^p]=\E[G_{1,2}^p]<\infty$ for all $n\in\N$, $p\ges 1$) and the sequence $(G_{n,3})_{n\in\N}$ are all geometric variables with uniformly bounded (in $n$) polynomial moments which are smaller than the corresponding moment of a geometric random variable with acceptance probability $\me^{-q\min\{c_0,r\rho\}}/5$.
Our next task is to establish a bound on 
$\E[M]$ and  prove that $M$ has moments of all order.
This will, by Lemma~\ref{lem:comp_cost}, imply that the running time of Algorithm~\ref{alg:triplet_Z} has finite moments of any order and, together with~\eqref{eq:R_n-mean}, yield
a bound on the expected running time $\E\left[\sum_{n=1}^M R_n\right]$.

Recall $K+\lceil c_0/(r\rho)\rceil\ges M$,
where $K$ is the number of jumps of $Q$ in the interval $[0,\tau_c^Z]$.
Since $[0,\tau_c^Z]\subset[0,\tau_c^{Y}]$, $K$ is bounded by the number of jumps of $Q$ on the interval $[0,\tau_c^{Y}]$, which is, conditionally given $\tau_c^{Y}$, Poisson distributed with mean $\tau_c^{Y}\Lambda_r$. Hence, $K$ possesses finite exponential moments of any order since, by~\cite[Lem.~A.2]{cazares2023fast}, $\tau_c^Y$ has finite exponential moments of any order. Moreover, by Lemma~\ref{lem:tau}(a) (with $u=1/(1+c_0)$), we have
\begin{equation}
    \label{eq:alg_bound}
\E[K]\les
\E[\tau_c^Y]\Lambda_r
\les \frac{\me^{c_0/(1+c_0)}\Lambda_r}{\vartheta\int_0^{r}(1-\me^{-x/(1+c_0)})\me^{-qx}x^{-\alpha-1}\md x}.
\end{equation}

On the other hand, since $Z=Y+Q$ we have $[0,\tau_c^Z]\subset[0,\tau_c^{Q}]$. Thus, $K$ is also bounded by the number of jumps $Q$ requires to cross the level $c_0$. This number equals the number of steps the jump-chain random walk, embedded in $Q$, requires to cross level $c_0$. Since the increment distribution of this walk is given by $\lambda_r(\md x)/\Lambda_r$, 
where $\lambda_{r}$ is the L\'evy measure in~\eqref{eq:Levy_measure_description},
Lemma~\ref{lem:tau}(b) implies
\begin{equation}
\label{eq:walk_bound}
\begin{split}
\E[K]
&\les\frac{\me^{c_0/(1+c_0)}}{1-\Lambda_r^{-1}\int_0^\infty \me^{-x/(1+c_0)}\lambda_r(\md x)}
=\frac{\me^{c_0/(1+c_0)}\Lambda_r}{\int_0^\infty (1-\me^{-x/(1+c_0)})\lambda_r(\md x)}\\
&=\frac{\me^{c_0/(1+c_0)}\Lambda_r}{\psi_0+\vartheta\int_r^{r_0}(1-\me^{-x/(1+c_0)})\me^{-qx}x^{-\alpha-1}\md x},
\end{split}
\end{equation}
where we recall that $\psi_0=\int_{(0,\infty)}(1-\me^{-x/(1+c_0)})\lambda_{r_0} (\md x)$. Since $\min\{1/x,1/y\}\les 2/(x+y)$ for any $x,y>0$, the bounds in~\eqref{eq:alg_bound} and~\eqref{eq:walk_bound} yield 
\[
\E[K]\les\frac{2\me^{c_0/(1+c_0)}\Lambda_r}{\psi_0+\vartheta\int_0^{r_0}(1-\me^{-x/(1+c_0)})\me^{-qx}x^{-\alpha-1}\md x}\les \frac{2\me\Lambda_r}{\psi_0+\vartheta\Upsilon(1/(1+c_0),r_0,q)},
\]
where the second inequality follows from
Lemma~\ref{lem:bound_psi_u}.

The mean of $\sum_{n=1}^M R_n$ (and thus of the expected running time of Algorithm~\ref{alg:triplet_Z}) is by Lemma~\ref{lem:comp_cost} and~\eqref{eq:R_n-mean} bounded above by a constant multiple of
\begin{equation*}
\left(\mathcal{T}_0(\alpha,\vartheta,q,\min\{c_0,r\rho\})/\left(1-\rho^\alpha\right)+5\me^{q\min\{c_0,r\rho\}}\right)
\left(\frac{2\me\Lambda_r}{\psi_0+\vartheta\Upsilon(1/(1+c_0),r_0,q)}+\frac{c_0}{r\rho}\right).
\end{equation*}
This readily yields~\eqref{eq:main_complexity}.
\end{proof}

\begin{remark}
    In the proof of Theorem~\ref{thm:main_complexity} above, in order to obtain the correct asymptotic bound on the complexity of Algorithm~\ref{alg:triplet_Z}, it was crucial to obtain both bounds~\eqref{eq:alg_bound} and~\eqref{eq:walk_bound}  on the expectation $\E[K]$. This is because, in the regime $\alpha\to0$,
    the truncation level $r$ (which in this case equals $2\alpha/q$) also goes to zero, making the denominator in the bound in~\eqref{eq:alg_bound} go to zero. This makes the bound in~\eqref{eq:alg_bound} deteriorate much faster than the actual behaviour of the algorithm. The denominator of the bound in~\eqref{eq:walk_bound} does not go to zero in this regime. 
\end{remark}

\section*{Acknowledgements}

JGC and AM are supported by the EPSRC grant  EP/V009478/1 and by The Alan Turing
Institute under the EPSRC grant EP/X03870X/1.
AM is also supported by the EPSRC grant EP/W006227/1.
FL is funded by The China Scholarship Council and The University of Warwick full PhD scholarship.

\appendix
\section{How general is condition~\eqref{eq:Levy_measure_description}?}
\label{sec:class_of_subordinators}
The main Algorithm~\ref{alg:triplet_Z} can be applied to quite general L\'evy process. In fact, we have following lemma. We say that $f(t)=\Oh(g(t))$ as $t\downarrow 0$ for a positive function $g$ if $\limsup_{t\downarrow 0}f(t)/g(t)<\infty$. 

\begin{lem}\label{lem:decompose}
Given two L\'evy densities $\varpi_1$ and $\varpi_2$ satisfying $\varpi_2(t)=(1+\Oh(t))\varpi_1(t)$ as $t\downarrow 0$, there exist constants $b,r>0$ and a finite measure $\bar\xi$ on $(0,\infty)$ (i.e. $\bar\xi(0,\infty)<\infty$) such that
\begin{equation}\label{eq:decompose}
    \varpi_2(t)=\me^{-bt}\varpi_1(t)\1\{t\les r\}+\bar\xi(t)\qquad\text{ for }t>0.
\end{equation}
\end{lem}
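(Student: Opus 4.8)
The plan is to \emph{define} $\bar\xi$ to be precisely the remainder dictated by~\eqref{eq:decompose}, namely $\bar\xi(t):=\varpi_2(t)-\me^{-bt}\varpi_1(t)\1\{t\les r\}$ for a suitably chosen pair $b,r>0$, and then to verify the two substantive claims: that this $\bar\xi$ is nonnegative (so that $\bar\xi(\md t):=\bar\xi(t)\md t$ is a genuine measure) and that it has finite total mass $\bar\xi(0,\infty)<\infty$; identity~\eqref{eq:decompose} then holds by construction.

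First I would unpack the hypothesis: $\varpi_2(t)=(1+\Oh(t))\varpi_1(t)$ as $t\downarrow0$ means there exist $\delta>0$ and $C>0$ with $(1-Ct)\varpi_1(t)\les\varpi_2(t)\les(1+Ct)\varpi_1(t)$ for all $0<t\les\delta$. The crucial point is to choose the exponential rate strictly above $C$, e.g.\ $b:=C+1$, and then to take $r\in(0,\delta]$ small enough that $\me^{-bt}\les1-Ct$ on $(0,r]$; the elementary bounds $1-bt\les\me^{-bt}\les 1-bt+\tfrac12 b^2t^2$ show that $r:=\min\{\delta,\,2/(C+1)^2\}$ suffices. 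With these choices, for $0<t\les r$ one gets $\bar\xi(t)=\varpi_2(t)-\me^{-bt}\varpi_1(t)\ges\varpi_1(t)\big((1-Ct)-\me^{-bt}\big)\ges0$, while for $t>r$ trivially $\bar\xi(t)=\varpi_2(t)\ges0$; hence $\bar\xi\ges0$ on $(0,\infty)$.

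It then remains to estimate $\bar\xi(0,\infty)=\int_0^r\big(\varpi_2(t)-\me^{-bt}\varpi_1(t)\big)\md t+\int_r^\infty\varpi_2(t)\md t$. For the tail integral, $\varpi_2$ being a L\'evy density gives $\int_r^\infty(1\wedge t^2)\varpi_2(t)\md t<\infty$ and $1\wedge t^2\ges1\wedge r^2>0$ on $[r,\infty)$, so $\int_r^\infty\varpi_2(t)\md t<\infty$. For the integral near the origin I would use $\me^{-bt}\ges1-bt$ together with $\varpi_2(t)\les(1+Ct)\varpi_1(t)$ to bound the integrand by $(b+C)\,t\,\varpi_1(t)$, and then invoke $\int_0^rt\,\varpi_1(t)\md t<\infty$, which holds since $\varpi_1$ is a L\'evy density (of a subordinator). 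Summing the two contributions yields $\bar\xi(0,\infty)<\infty$, completing the argument.

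I expect the only genuine subtlety to be the coupled choice of $b$ and $r$ in the second step: one needs $b>C$ so that the tempered density $\me^{-bt}\varpi_1(t)$ stays below $\varpi_2(t)$ near $0$ --- which is exactly what makes the remainder nonnegative --- while keeping $r$ inside the window $(0,\delta]$ where the $\Oh(t)$ estimate is available and small enough for the exponential inequality to apply. Everything after that is routine bookkeeping.
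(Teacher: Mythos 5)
Your proposal is correct and mirrors the paper's own argument: both define $\bar\xi(t):=\varpi_2(t)-\me^{-bt}\varpi_1(t)\1\{t\les r\}$, establish nonnegativity via the inequality $\me^{-bt}\les 1-Ct$ (valid once $b>C$ and $t$ is small), and obtain finite mass from the bound $\varpi_2(t)-\me^{-bt}\varpi_1(t)\les(b+C)t\,\varpi_1(t)$ near the origin. You are merely more explicit than the paper about the concrete choice $b=C+1$, $r=\min\{\delta,2/(C+1)^2\}$ and about the tail $\int_r^\infty\varpi_2(t)\,\md t<\infty$, which the paper leaves implicit.
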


It is easy to see that, for any driftless subordinator $\bar Z$ with L\'evy density $\bar\varpi$ satisfying $\bar\varpi(t)=(1+\Oh(t))t^{-\alpha-1}$ as $t\downarrow 0$, it can be decomposed into $\bar Z=\bar S+\bar Q$, where $\bar S$ and $\bar Q$ are independent with L\'evy densities
$\me^{-bt}t^{-\alpha-1}\1\{t\les r\}$ and $\bar\xi(t)$, respectively, for some constants $b,r>0$. Also, as a result of Theorem~\ref{thm:main_complexity}, we can optimise the running time by choosing proper $b$ and $\bar \xi$.

\begin{proof}[{\textbf{{Proof of Lemma~\ref{lem:decompose}}}}]

There exists positive constants $a_1, a_2$ and $r_0$ such that 
   $$
   (1-a_1t)\varpi_1(t)\les \varpi_2(t)\les(1+a_2t)\varpi_1(t)$$
    for $0<t\les r_0$. Fix $b\ges a_1$ and $0<r\les r_0$ such that 
$e^{-bt}\les 1-a_1t$
    for $0<t\les r$. If $a_1=0$, then let $b=0$ and $r=r_0$. Define $\phi(t):=e^{-bt}\varpi_1(t)1\{t\les r\}$. It is clear that $\phi(t)\les\varpi_2(t)1\{t\les r\}$. Let $\bar\xi(t):=\varpi_2(t)-\phi(t)$. Note that for $0<t<r$
   \begin{equation*}
   \begin{aligned}
   0\les \bar\xi(t)=\varpi_2(t)-\phi(t)
   &\les (1+a_2t)\varpi_1(t)-\phi(t)\\
   &=(1+a_2t-e^{-bt})\varpi_1(t)
   \les(a_2+b)t\varpi_1(t),
   \end{aligned}
   \end{equation*}
   hence $\bar\xi$ is the Levy measure of some compound Poisson random variable. 
\end{proof}

\section{First moment of first-passage times}
\label{sec:hitting_times}

\begin{lem}\label{lem:tau}
{\normalfont(a)} Let $X$ be a driftless non-decreasing L\'evy process with L\'evy measure $\nu$ on $(0,\infty)$. For any $c_0>0$, define $\tau_{c_0}^X:=\inf\{t>0:X_t>c_0\}$ and $\psi_X(u):=\int_{(0,\infty)}(1-\me^{-u x})\nu(\md x)$ for $u>0$. Then $\E[\tau_{c_0}^X]\les\me^{uc_0}/\psi_X(u)$ for any $u>0$.\\
{\normalfont(b)} Let $R=(R_n)_{n\in\N}$ be a non-decreasing random walk with increment distribution $\mu$ on $(0,\infty)$. For any $c_0>0$, define $\tau_{c_0}^R:=\inf\{n>0:R_n>c_0\}$ and $\psi_R(u):=\E[\me^{-uR_1}]=\int_{(0,\infty)}\me^{-u x}\mu(\md x)$ for $u>0$. Then we have $\E[\tau_{c_0}^R]\les\me^{uc_0}/(1-\psi_R(u))$ for any $u>0$. 
\end{lem}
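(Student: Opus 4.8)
The plan is to run the standard exponential--martingale argument in both the continuous- and discrete-time settings, obtain an exponential (resp.\ geometric) tail bound for $\tau_{c_0}$, and convert it into a bound on $\E[\tau_{c_0}]$ via the layer-cake identities $\E[\tau]=\int_0^\infty\p[\tau>t]\,\md t$ and $\E[\tau]=\sum_{n\ges 0}\p[\tau>n]$.

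For part (a), I would first recall that a driftless subordinator $X$ with L\'evy measure $\nu$ has Laplace exponent $\psi_X$, i.e.\ $\E[\me^{-uX_t}]=\me^{-t\psi_X(u)}$ for all $u>0$; by the independent and stationary increments of $X$ this makes $M_t:=\me^{-uX_t+t\psi_X(u)}$ a martingale with $M_0=1$. For fixed $t$, the stopped process $(M_{s\wedge\tau_{c_0}^X})_{s\les t}$ is bounded by $\me^{t\psi_X(u)}$ (since $\me^{-uX}\les 1$ and $\psi_X(u)\ges 0$), so optional stopping gives $\E[M_{t\wedge\tau_{c_0}^X}]=1$. Restricting this expectation to the event $\{\tau_{c_0}^X>t\}$, on which the non-decreasing right-continuous path of $X$ forces $X_t\les c_0$, I obtain $1\ges\me^{t\psi_X(u)-uc_0}\,\p[\tau_{c_0}^X>t]$, that is $\p[\tau_{c_0}^X>t]\les\me^{uc_0-t\psi_X(u)}$. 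Integrating over $t\in[0,\infty)$ (and using the trivial bound $\p[\tau_{c_0}^X>t]\les 1$ where needed) then yields $\E[\tau_{c_0}^X]\les\me^{uc_0}/\psi_X(u)$; the degenerate case $\nu\equiv 0$, where $\psi_X(u)=0$, makes the claimed bound vacuous.

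Part (b) is the word-for-word discrete analogue. With i.i.d.\ increments of law $\mu$ one has $\E[\me^{-uR_n}]=\psi_R(u)^n$, and $\psi_R(u)\in(0,1)$ because $\mu$ is carried by $(0,\infty)$; hence $N_n:=\me^{-uR_n}\psi_R(u)^{-n}$ is a martingale with $N_0=1$. Optional stopping at $n\wedge\tau_{c_0}^R$ (the stopped martingale is bounded on each finite index horizon) together with restriction to $\{\tau_{c_0}^R>n\}\subseteq\{R_n\les c_0\}$ gives $\p[\tau_{c_0}^R>n]\les\me^{uc_0}\psi_R(u)^n$. Summing over $n\ges 0$ and using that $\tau_{c_0}^R$ is $\{1,2,\dots\}$-valued then gives $\E[\tau_{c_0}^R]=\sum_{n\ges 0}\p[\tau_{c_0}^R>n]\les\me^{uc_0}\sum_{n\ges 0}\psi_R(u)^n=\me^{uc_0}/(1-\psi_R(u))$.

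I do not expect any genuine obstacle here; the argument is short and elementary. The only points that deserve a line of care are the justification of optional stopping, for which it suffices that the stopped exponential martingale is bounded on every finite horizon; the observation that $\{\tau_{c_0}>t\}$ forces the process to lie below $c_0$ at time $t$, which is immediate from the monotonicity of the paths; and the strict inequality $\psi_R(u)<1$ (resp.\ the handling of the degenerate $\psi_X(u)=0$), which is what makes the final geometric sum (resp.\ the integral) converge.
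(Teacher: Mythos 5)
Your proof is correct and arrives at exactly the same intermediate tail bounds, $\p[\tau_{c_0}^X>t]\les\me^{uc_0-t\psi_X(u)}$ and $\p[\tau_{c_0}^R>n]\les\me^{uc_0}\psi_R(u)^n$, as the paper (whose part (a) is cited from an earlier paper and whose part (b) applies Markov's inequality directly to $\me^{-uR_n}$). The only difference is cosmetic: you invoke the exponential martingale together with optional stopping at $t\wedge\tau$, but this machinery is superfluous here, since on $\{\tau>t\}$ one has $t\wedge\tau=t$, and the identity $\E\big[\me^{-uX_t+t\psi_X(u)}\big]=1$ (resp.\ $\E\big[\me^{-uR_n}\big]=\psi_R(u)^n$) already follows from the Laplace transform without any stopping argument; combined with $\{\tau>t\}\subseteq\{X_t\les c_0\}$ and nonnegativity, it gives the same tail bound that the paper obtains via a one-line Chernoff inequality. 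So your argument is essentially the same as the paper's, just dressed in slightly heavier language.
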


\begin{proof}[{\textbf{{Proof of Lemma~\ref{lem:tau}}}}]
Part (a) is proved in~\cite[Lem.~A.2]{cazares2023fast}. Part~(b) follows along similar lines: for $u>0$ and $n\in\N$, we have
\[
\p[\tau_{c_0}^R>n]
\les\p[R_n\les c_0]
=\p[\me^{-uR_n}\ges \me^{-uc_0}]
\les \E[\me^{-uR_n}]\me^{u c_0} = \psi_R(u)^n\me^{u c_0}.
\]
Note $\psi_R(u)\in(0,1)$. Thus, 
\[
\E[\tau_{c_0}^R]=
\sum_{n\in\N}n\p[\tau_{c_0}^R=n]=\sum_{n\in\N}\sum_{i=1}^{n}\p[\tau_{c_0}^R=n] = 
\sum_{i\in\N}\sum_{n=i}^\infty \p[\tau_{c_0}^R=i]=\sum_{i\in\N}\p[\tau_{c_0}^R>i-1]\les \sum_{i\in\N}\psi_R(u)^{i-1}\me^{u c_0},
\]
and the bound in part~(b) follows.
\end{proof}

\section{Infinite expected running time of Step~5 of the algorithm  in~\cite[Sec.~7]{chi2016exact}}
\label{app:Chi}

Suppose $Y$, $P$ and $\zeta$ are as in Subsection~\ref{subsec:infinite_complexity_Chi} above. First we show that sampling the undershoot $\zeta_{\tau_{\bar b}^\zeta-}$ as in~\cite{chi2016exact} has infinite expected running time (this proof, originally presented in~\cite[Sec.~2.4]{cazares2023fast}, is recalled here because it facilitates the main proof of this section). Suppose for simplicity that $\theta=1$ and $c\equiv \bar{b}\equiv r$. Consider Step 3 of the algorithm in~\cite[Sec.~7]{chi2016exact}, which uses rejection-sampling to simulate the pair $(\zeta_{\tau_c^\zeta-},\Delta_{\zeta}(\tau_c^\zeta))$ given $\{\tau_c^\zeta=\tau\}$. Recall from~\cite[Prop.~4.1]{cazares2023fast} that $\eta\coloneqq \tau^{-1/\alpha}r=(\tau_c^\zeta)^{-1/\alpha}r\eqd \zeta_1$ follows a stable law. The algorithm proposes $\zeta_{\tau-}=Br$, where $B\sim \mathrm{Beta}(1,1-\alpha)$, with acceptance probability $p=h(\tau^{-1/\alpha}\zeta_{\tau-},U)/M_\alpha=h(B\eta,U)/M_\alpha$. Here $U\sim\Unif(0,1)$ is an auxiliary variable, $h(x,u)\coloneqq \sigma_\alpha(u)x^{-\beta-1}\me^{-\sigma_\alpha(u)x^{-\beta}}$ with $\beta\coloneqq\alpha/(1-\alpha)$ and $\sigma_\alpha$ as in~\eqref{eq:zolotarev_density} above, and $M_\alpha\coloneqq(1-\alpha)^{1-1/\alpha}\alpha^{-1-1/\alpha}\me^{-1/\alpha}$ is a global bound on $h$. Thus, the expected running time of this step equals $\E[\mathfrak{C}]$, where $\mathfrak{C}\coloneqq 1/p\eqd M_\alpha/h(B \eta,U)$ and the variables $U$, $B$ and $\eta$ are independent. Hence, 
\begin{equation}
\label{eq:chi_complexity}
\mathfrak{C}\eqd M_\alpha/h(B \eta,U)=M_\alpha\sigma_\alpha(U)^{-1}(B\eta)^{\beta+1}\exp(\sigma_\alpha(U)(B\eta)^{-\beta})
\ges M_\alpha\sigma_\alpha(U)^{-1}B^{\beta+1}\eta^{\beta+1}.
\end{equation}
However, by e.g.~\cite[Eq.~(4.26)]{cazares2023fast}, $\E[\eta^\ell]=\E[\zeta_1^\ell]<\infty$ if and only if $\ell<\alpha$. Since $1+\beta=1+\alpha/(1-\alpha)>1$ for all $\alpha\in(0,1)$, the expected running time $\E[\mathfrak{C}]$ of this step is indeed infinite. 

Next we show that sampling $Y_{\tau-}$, conditional on $Y_{\tau-}+P_{\tau-}=s\le r$, given in Step~5 of the algorithm  in~\cite[Sec.~7]{chi2016exact}, also has infinite expected running time. Let all variables and functions be as in the previous paragraph. The algorithm in~\cite[Sec.~7]{chi2016exact} first samples a stable variable $s$, conditioned on $\{s\les r\}$, and a discrete variable $\kappa$ with law $\p(\kappa=k)=C_k/C$, where $C_k=[s^{1-\alpha}r^\alpha\eta^{-\alpha}\alpha q]^k/[k!\Gamma(1+(1-\alpha)k)]$ for $k\in\N\cup\{ 0\}$ and \[
C=\sum_{k=0}^\infty C_k
\les 2\sum_{n=0}^\infty\frac{[s^{1-\alpha}r^\alpha\eta^{-\alpha}\alpha q]^k}{k!}
=2\me^{s^{1-\alpha}r^\alpha\eta^{-\alpha}\alpha q}.
\]
(recall that $\eta=\tau^{-1/\alpha}r\eqd \zeta_1$). The simulation of $\kappa$ may itself have a large complexity, but we assume here that it has constant cost. Then the algorithm samples $B'\sim\mathrm{Beta}(1,(1-\alpha)\kappa)$ (if $\kappa=0$, then $B'=1$ a.s.) and $U\sim\Unif(0,1)$ and proposes $Y_{\tau-}=B's$ with acceptance probability bounded below by $p'=h(\eta B's/r ,U)/M_\alpha$. Thus, by~\eqref{eq:chi_complexity} and Jensen's inequality, the expected complexity, conditionally given $s$, is bounded below by
\[
\E[1/p'|s]
\ges M_\alpha(s/r)^{\beta+1}
    \E[\sigma_\alpha(U)^{-1}]
    \E[B'\eta|s]^{\beta+1},
\]
which we claim to be infinite. Note that, a.s., $\E[B'|\kappa,\eta,s]=(1+(1-\alpha)\kappa)^{-1}$, $\p[\kappa=0|\eta,s]=1/C$ and hence 
\[
\E[B'\eta|s]
=\E[(1+(1-\alpha)\kappa)^{-1}\eta|s]
\ges\E[\eta\1_{\{\kappa=0\}}|s]
=\E[\eta/C|s]
\ges \tfrac{1}{2}\me^{-s^{1-\alpha}r^\alpha\alpha q}\E[\eta\1_{\{\eta>1\}}].
\]
Since $\E[\eta]=\E[\zeta_1]=\infty$, the right-most expectation in the display is infinite, implying that 
Step~5 of the algorithm  in~\cite[Sec.~7]{chi2016exact}
 also has infinite expected running time.

\bibliographystyle{plain}

\bibliography{ref_first_passage}

\end{document}